\newtheorem{thm}{Theorem}[section] % reset theorem numbering for each section
\newtheorem{lem}{Lemma}[section] 
\newtheorem{cons}{Construction}[section] 
\newtheorem{ex}{Example}[section] 
\newtheorem{remark}{Remark}[section]
\newtheorem{preproof}{{\bf Proof.}}
\newenvironment{proof}[1]{\begin{preproof}{\rm
               #1}\hfill{$\Box$}}{\end{preproof}}
\title{\bf\Large Some Constructions for Amicable Orthogonal Designs}
\author{
Ebrahim Ghaderpour $^{1}$ \\[0.3cm]
{\sl Department of Earth and Space Science and Engineering}\\
{\sl York University, Toronto, ON, Canada M3J-1P3}\\[0.2cm]}
\date{}
\begin{document}\sloppy
\maketitle
\footnotetext[1]{E-mail: {\tt
ebig2@yorku.ca.}}

\begin{abstract}
\noindent
Hadamard matrices, orthogonal designs and amicable orthogonal designs have a number of applications in coding theory, cryptography, wireless network communication and so on. 
Product designs were introduced by Robinson in order to construct orthogonal designs especially full orthogonal designs (no zero entries) with maximum number of variables for some orders. 
He constructed product designs of orders $4$, $8$ and $12$ and types  $\big(1_{(3)};  1_{(3)};  1\big),$ $\big(1_{(3)};  1_{(3)};  5\big)$ and $\big(1_{(3)};  1_{(3)};  9\big)$, respectively.
In this paper, we first show that there does not exist any product design of order $n\neq 4$, $8$, $12$ and type
$\big(1_{(3)};  1_{(3)};  n-3\big),$ where the notation $u_{(k)}$ is used to show that $u$ repeats $k$ times.  Then, following the Holzmann and Kharaghani's methods,
we construct some classes of disjoint and  some classes of full amicable orthogonal designs,
and we obtain an infinite class of full amicable orthogonal designs.  Moreover, a full amicable orthogonal design of order $2^9$ and type $\big(2^6_{(8)}; 2^6_{(8)}\big)$ is constructed.

\vspace{.7cm}
\noindent{\bf Keywords:} Amicable Orthogonal Design, Circulant and Back-Circulant Matrices, Product Design, Radon-Hurwitz Number. \end{abstract}
%%%%%%%%%%%%%%%%%%%%%%%%%%%%%%%%%%%%%%%%%%%%%%%%%%%%%%%%%%%%%%%%%%%
\section{Introduction}\label{introduction}
The definitions in this section can be all found in \cite{GS}.

A {\it Hadamard matrix} of order $n$ is a square matrix of order $n$ with $\pm 1$ entries such that $$HH^{\rm T}=nI_n,$$ where $H^{\rm T}$ is the transpose of $H$, and $I_n$ is the identity matrix of order $n$. It is conjectured that a Hadamard matrix of order $4m$ exists for each $m\ge 1$.

An {\it orthogonal design} (OD) of order $n$ and type $(c_1, \ldots, c_k)$, denoted $OD(n; \ c_1, \ldots, c_k)$, 
 is a square matrix $C$ of order $n$ with entries from $\{0, \pm  x_1, \ldots, \pm  x_k\}$  that satisfies
$$ CC^{\rm T}= \Big(\sum_{j=1}^k c_j x_j^2 \Big)I_n,$$ where the $x_j$'s are commuting variables.  
An OD with no zero entry is called a {\it full} OD.   A Hadamard matrix can be obtained by equating all variables of a full OD to 1.

Radon \cite{Radon} worked on a proposition concerning the composition of quadratic forms (extended by  Hurwitz \cite{Hurwitz}) which has a connection in determination of the  maximum number of variables in ODs. According to the proposition, the maximum number of variables in an OD of order $n=2^ab,$ $b$ odd, is $\rho(n)=8c+2^d$, where $a=4c+d$, $0\le d<4$  (see \cite[Chapter 1]{GS}). This number is called {\it Radon-Hurwitz number}.

Two square matrices $A$ and $B$ are called {\it amicable} if $AB^{\rm T}=BA^{\rm T}$. They are called {\it anti-amicable} if $AB^{\rm T}=-BA^{\rm T}$.
Suppose that $C$ is an $OD(n; \ c_1,c_2,\ldots,c_k)$ with variables $x_1,\ldots, x_k$, and $D$ is an $OD(n; \ d_1,d_2,\ldots,d_m)$ with variables $y_1,\ldots, y_m$, where the sets  $\{x_1,\ldots, x_k\}$ and $\{y_1,\ldots, y_m\}$ are disjoint. Then $(C;D)$ is called an {\it amicable orthogonal design} (AOD)
denoted $$AOD\big( n;\ c_1,c_2,\ldots,c_k; \ d_1,d_2,\ldots,d_m\big),$$ if $CD^{\rm T}=DC^{\rm T}$.   It can be seen that if $(C;D)$ is an AOD, then $\left[ \begin {array}{rr} C&D\\ D&-C\end {array} \right]$ forms an $$OD\big( 2n;\ c_1,c_2,\ldots,c_k, d_1,d_2,\ldots,d_m\big).$$

Wolfe \cite{Wolfe} showed that the total number of
variables in an AOD of order $n=2^ab,$ $b$ odd, is less than or equal to $2a+2$.

A {\it rational family} of order $n$ and type $(r_1 , \ldots , r_k),$ where the $r_j$'s are positive rational numbers, is a collection of $k$ 
rational matrices of order $n,$ $A_1, \ldots, A_k,$ satisfying

(i)  $A_iA_i^{\rm T}=r_iI_n,   \ \ 1\leq i \leq k;$

(ii)  $A_iA_j^{\rm T}=-A_jA_i^{\rm T}, \ \ 1\leq i\neq j \leq k.$ 

%%%%%%%%%%%%%%%%%%%%%%%%%%%%%%%%%%%%%%%%%%%%%%%%%%%%%%%%%%%%%%%%%%%
The {\it Hadamard product} of two square matrices $A=[a_{ij}]$ and $B=[b_{ij}]$ of order $n$, denoted $A*B$, is a square matrix of order $n$ such that its entries are computed via entrywise multiplication of $A$ and $B$, i.e., $A*B=[a_{ij}b_{ij}]$. $A$ and $B$ are called {\it disjoint} if $A*B={\bf0}$.

Let $A=\big(a_1, \ldots, a_{n}\big).$ The square matrix $C=[c_{ij}]$ of order $n$ is called {\it circulant} if $c_{ij}=a_{j-i+1}$, denoted ${\rm circ}\big(a_1, \ldots, a_{n}\big)$, where $j-i$ is reduced modulo $n$. The square matrix  $B=[b_{ij}]$ of order $n$ is called {\it back-circulant} if $b_{ij}=a_{i+j-1}$, denoted ${\rm backcirc}\big(a_1, \ldots, a_{n}\big)$, where $i+j-2$ is reduced modulo $n$. 
It is shown that (see \cite[Chapter 4]{GS}) if $B$ is  back-circulant and $A$ and $C$ are circulant matrices of order $n$, then
$B=B^{\rm T}$, $AC=CA$ and $BC^{\rm T}=CB^{\rm T}$.

Assume that $M_1$, $M_2$ and $N$ are ODs of order $n$ and types $(a_1,\ldots, a_r),$  $(b_1,\ldots , b_s)$ and $(u_1, \ldots, u_t),$ respectively.
Then $(M_1; M_2; N)$ is called a {\it product design} (PD) of order $n$ and type $\big(a_1,\ldots, a_r;  b_1, \ldots, b_s;  u_1,\ldots, u_t\big)$, denoted $PD\big(n; \  a_1,\ldots, a_r;  b_1, \ldots, b_s;  u_1,\ldots, u_t \big),$ if the following conditions hold:

(i)  $M_1*N=M_2*N={\bf0}, $ 

(ii)   $M_1+N $ and $M_2+N$ are ODs, and 

(iii)   $M_1M_2^{\rm T}=M_2M_1^{\rm T}.$ 

The following theorem from Robinson \cite{Robinson} shows how to construct an OD by combining
an AOD and a PD.

\begin{thm} \label{pdod}
Suppose $(C; D_1+D_2)$ is an $AOD\big(m; \ c_1, \ldots, c_k; \ v,w_1,\ldots, w_{\ell}\big)$ and  $(M_1;M_2;N)$ 
is a $PD\big(n; \ a_1,\ldots, a_r;  b_1,\ldots, b_s; u_1,\ldots, u_t\big),$ where $D_1$ is an $OD(m;\ v)$ and $D_2$ is an $OD\big(m;\ w_1,\ldots, w_{\ell}\big)$. Let 
$b$, $c$, $u$ and $w$ be the sums of the $b_i$'s, $c_i$'s, $u_i$'s and $w_i$'s, respectively.
Then there exist

{\rm (i)}  $OD\big(mn; \ va_1,\ldots, va_r, wb_1, \ldots, wb_s, cu_1,\ldots, cu_t\big),$ 

{\rm (ii)}  $OD\big(mn; \ va_1,\ldots, va_r, wb_1, \ldots, wb_s, c_1u,\ldots, c_ku\big),$ 

{\rm (iii)}  $OD\big(mn; \ va_1,\ldots, va_r, w_1b, \ldots, w_{\ell}b, cu_1,\ldots, cu_t\big),$ 

{\rm (iv)}  $OD\big(mn; \ va_1,\ldots, va_r, w_1b, \ldots, w_{\ell}b, c_1u,\ldots, c_ku\big).$
\end{thm}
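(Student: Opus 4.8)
The plan is to build the four orthogonal designs directly by forming the appropriate block-combinations of the pieces of the AOD and the PD, and then verify the defining OD identity by a block computation that splits into a "within $M_i$" term and a "cross $M_1 M_2$" term, using the PD conditions to kill the cross term and the AOD condition to handle the $D_1, D_2$ interaction.

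First I would fix notation: write $C$ as the $OD(m; c_1,\dots,c_k)$ in variables $x_1,\dots,x_k$, write $D_1 = y D_1'$ where $D_1'$ is a $\pm1,0$ matrix (so $D_1 D_1^{\mathrm T} = v y^2 I_m$) in the new variable $y$, and $D_2$ as the $OD(m; w_1,\dots,w_\ell)$ in variables $z_1,\dots,z_\ell$; for the PD, $M_1$ has variables $p_1,\dots,p_r$, $M_2$ has variables $q_1,\dots,q_s$, $N$ has variables $t_1,\dots,t_r$ — actually $t_1,\dots,t_t$. For part (i) the candidate matrix is
$$
E \;=\; \sum_{i=1}^r p_i\,(C_i \otimes ?) \;+\;\dots
$$
but it is cleaner to describe it as follows: replace the variable $v$-part and $w$-part of the AOD's second component by $N$ and by $C$ respectively, and the component $C$ of the AOD by the two components $M_1, M_2$ of the PD. Concretely, in part (i) one takes
$$
E \;=\; D_1 \otimes M_1 \;+\; D_2 \otimes M_2 \;+\; C \otimes N,
$$
where the tensor symbol is interpreted so that the "variable-bearing" factor on the left multiplies the matrix on the right (i.e. each monomial $y_a$-entry of $D_1$ scales a copy of $M_1$, etc.), which is legitimate because $M_1*N = M_2*N = \mathbf 0$ and $D_1*D_2 = \mathbf 0$ inside $D_1+D_2$, so the resulting entries are genuinely of the form (monomial in $y,z,x$)$\times$(variable of $M_1$ or $M_2$ or $N$) with disjoint supports, hence $E$ has entries from $\{0,\pm vp\}$-type products, matching the claimed type $(va_1,\dots,va_r, wb_1,\dots,wb_s,cu_1,\dots,cu_t)$. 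Parts (ii)–(iv) are the same construction with the roles of "which factor carries the variable" swapped in the second and/or third summand, e.g. in (ii) one uses $C\otimes N$ replaced by a block where the $c_i$'s sit on the right and $N$'s single "sum" $u$ on the left, realised via $N' \otimes C$ with $N'$ a suitable $0,\pm1$ matrix reduction of $N$; I would spell out each of the four substitution patterns explicitly in a short displayed list.

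The core computation is then $EE^{\mathrm T}$. Expanding the block product $(D_1\otimes M_1 + D_2\otimes M_2 + C\otimes N)(D_1\otimes M_1 + D_2\otimes M_2 + C\otimes N)^{\mathrm T}$ gives nine terms. The three "diagonal" terms give $D_1D_1^{\mathrm T}\otimes M_1M_1^{\mathrm T} + D_2 D_2^{\mathrm T}\otimes M_2 M_2^{\mathrm T} + CC^{\mathrm T}\otimes NN^{\mathrm T}$, which by the OD property of each factor equals $(vy^2 I_m)\otimes(\sum a_i p_i^2 I_n) + (\sum w_j z_j^2 I_m)\otimes(\sum b_j q_j^2 I_n) + (\sum c_i x_i^2 I_m)\otimes(\sum u_i t_i^2 I_n)$, and collecting coefficients this is exactly $\big(\sum v a_i p_i^2 + \sum w b_j q_j^2 + \sum c u_i t_i^2\big) I_{mn}$ once one uses $\sum w_j z_j^2 = w$-weighted... — here I need to be careful: the cross-type coefficient $w b_j$ arises because $D_2 D_2^{\mathrm T}=(\sum w_j z_j^2)I_m$ and it multiplies $\sum b_j q_j^2 I_n$; the product of the two scalar quadratic forms is not a single quadratic form unless one of them degenerates, which is precisely why the PD is set up so that $D_2$ and $M_2$ never share a row-support — wait, that is automatic from the tensor structure. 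Let me instead note the standard trick: since the $z$-variables appear only through $D_2$ and the $q$-variables only through $M_2$, in the final OD we may treat each product $z_j q_{j'}$... no. The correct resolution, which I will follow, is Robinson's: one does \emph{not} keep all $z_j q_{j'}$ as separate variables; rather, because $D_2$ is itself an OD one first writes $D_2\otimes M_2$ as $\sum_j z_j (D_2^{(j)}\otimes M_2)$ and observes $D_2^{(j)}(D_2^{(j')})^{\mathrm T}$ for $j\ne j'$ is anti-symmetric-pair-cancelling, so that $D_2 D_2^{\mathrm T}\otimes M_2M_2^{\mathrm T}$ does collapse to $(\sum_j w_j z_j^2)(\sum_{j'} b_{j'} q_{j'}^2) I_{mn}$; and then — this is the one genuinely delicate point — one re-reads this expression as $\sum_{j'} b_{j'}\big(\sum_j w_j z_j^2\big) q_{j'}^2 = \sum_{j'} (w b_{j'}) \tilde q_{j'}^2$ only after substituting $q_{j'} \mapsto \sqrt{?}$... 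This does not literally work with commuting variables, so in fact the honest statement is that $E$ is an OD in the variables $\{p_i\}\cup\{z_j q_{j'}\}\cup\{x_i t_{i'}\}$-style \emph{grouped} monomials; Robinson's type count $(va_1,\dots,wb_s,\dots)$ records exactly how many independent squared-monomials of each "coefficient weight" appear. I would therefore present the verification as: (a) the three off-diagonal block-pairs vanish, the $D_1,D_2$ pair by the AOD hypothesis $C(D_1+D_2)$-amicability together with $M_1M_2^{\mathrm T}=M_2M_1^{\mathrm T}$ from the PD (condition (iii)), the $D_1,C$ and $D_2,C$ pairs by $M_1 N^{\mathrm T}$, $M_2 N^{\mathrm T}$ vanishing which follows because $M_i*N=\mathbf 0$ forces, after combining with the OD property of $M_i+N$, the relation $M_i N^{\mathrm T} + N M_i^{\mathrm T}=\mathbf 0$, and matched with $D_i C^{\mathrm T}$ data from the AOD; (b) the three diagonal blocks sum to the scalar matrix with the asserted coefficients.

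The main obstacle, and the step I would spend the most care on, is (a): showing each of the three mixed double-block terms cancels. The cleanest route is to record first, as a lemma-free observation from the PD axioms, that $M_1 M_2^{\mathrm T}=M_2 M_1^{\mathrm T}$ (given), $M_i N^{\mathrm T}=-N M_i^{\mathrm T}$ (derived from (i)+(ii): expand $(M_i+N)(M_i+N)^{\mathrm T}$), and similarly from the AOD hypothesis $C D_1^{\mathrm T}+ CD_2^{\mathrm T} = D_1 C^{\mathrm T}+D_2 C^{\mathrm T}$ together with $D_1 D_2^{\mathrm T}=D_2 D_1^{\mathrm T}$ (this last because $D_1+D_2$ is an OD and $D_1,D_2$ are disjoint ODs, so the cross term in $(D_1+D_2)(D_1+D_2)^{\mathrm T}$ must be a scalar matrix, forcing symmetry, in fact forcing $D_1D_2^{\mathrm T}=-D_2D_1^{\mathrm T}$ since both $D_1D_1^{\mathrm T}$ and $D_2D_2^{\mathrm T}$ are already scalar and $(D_1+D_2)(D_1+D_2)^{\mathrm T}$ is scalar — hence $D_1D_2^{\mathrm T}+D_2D_1^{\mathrm T}=\mathbf 0$). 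Feeding these into the nine-term expansion and using $(P\otimes Q)(R\otimes S)^{\mathrm T}=PR^{\mathrm T}\otimes QS^{\mathrm T}$ makes every mixed term appear paired with its negative, so they cancel. Parts (ii)–(iv) then need no new ideas: the substitution pattern only changes which symbol sits left versus right in each tensor factor, and the same three algebraic relations do the job; I would simply remark that the computation is identical mutatis mutandis and state the resulting four types. $\Box$
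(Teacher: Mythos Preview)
The paper does not prove Theorem~\ref{pdod}; it merely states the result with attribution to Robinson and then applies it. So there is no ``paper's own proof'' to compare your attempt against.

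That said, the construction you outline --- forming $E = D_1'\otimes M_1 + D_2'\otimes M_2 + C'\otimes N$ (with suitable factors specialised to $\{0,\pm1\}$ matrices) and verifying that $EE^{\rm T}$ is a scalar multiple of the identity via the PD relations $M_iN^{\rm T}+NM_i^{\rm T}=\mathbf 0$, $M_1M_2^{\rm T}=M_2M_1^{\rm T}$ together with the AOD relations --- is essentially Robinson's original argument, so you are on the standard track.

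Your write-up does contain a genuine muddle that you should clean up. You get tangled in the issue of the product $(\sum_j w_j z_j^2)(\sum_{j'} b_{j'} q_{j'}^2)$ not being a quadratic form, start several wrong resolutions, and never state the correct one plainly. The fix is simply that for part~(i) one \emph{specialises} all the variables of $D_2$ and of $C$ to $1$ before forming $E$; then $D_2'(D_2')^{\rm T}=wI_m$ and $C'(C')^{\rm T}=cI_m$ are honest integer weighing matrices, the only surviving commuting variables are those of $M_1,M_2,N$, and the diagonal block contribution is literally $\big(v\sum a_ip_i^2 + w\sum b_jq_j^2 + c\sum u_kt_k^2\big)I_{mn}$. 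Parts~(ii)--(iv) differ not by ``which symbol sits left versus right in each tensor factor'' but by \emph{which} factor in the second and third summands is flattened to a $\{0,\pm1\}$ matrix and which retains its variables. Finally, for the cross terms you need $CD_i^{\rm T}=D_iC^{\rm T}$ for $i=1,2$ \emph{separately}; this follows from the single AOD relation $C(D_1+D_2)^{\rm T}=(D_1+D_2)C^{\rm T}$ by comparing terms of each variable-degree (since $D_1$ and $D_2$ involve disjoint variable sets), but you only gesture at ``$D_iC^{\rm T}$ data from the AOD'' without saying why it splits.
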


Robinson \cite{Robinson} constructed product designs
$PD\big(4; \ 1_{(3)};  1_{(3)};  1\big),$ $PD\big(8; \ 1_{(3)};  1_{(3)};  5\big)$ and $PD\big(12; \ 1_{(3)};  1_{(3)}; 9\big).$ He used these PDs and applied Theorem \ref{pdod} with some known AODs to construct some full ODs of small orders with maximum number of variables. For instance, he applied Theorem \ref{pdod} to a $PD\big(12; \ 1_{(3)};  1_{(3)}; 9\big)$ and an $AOD\big(2; \ 1_{(2)};1_{(2)}\big)$ to construct an $OD\big(24; 1_{(6)},9_{(2)}\big)$.

%%%%%%%%%%%%%%%%%%%%%%%%%%%%%%%%%%%%%%%%%%%%%%%%%%%%%%%%%
\section{A non-existence result for product designs}
Although Robinson constructed $PD\big(4; \ 1_{(3)};  1_{(3)};  1\big),$ $PD\big(8; \ 1_{(3)};  1_{(3)};  5\big)$, $PD\big(12; \ 1_{(3)};  1_{(3)}; 9\big)$, he did not show if there is any $PD\big(n; \ 1_{(3)};  1_{(3)};  n-3\big)$ for some $n\neq 4,8,12$. In this section, we show that in fact there does not exist any  $PD\big(n; \ 1_{(3)};  1_{(3)};  n-3\big)$ for all $n\neq 4,8,12.$ In doing so, we first mention
the following well known theorems, and then we prove Theorem \ref{pdnon}.

\begin{thm} {\rm (Vinogradov \cite{vin}).}\label{Hilbert}
Suppose that  $a$, $a'$, $b$ and $c$ are nonzero p-adic numbers, $p$ is a prime number, and $r$ and $s$ are positive integers. Define  $(a,b)_p$, the $p$-adic Hilbert symbol, to be $1$ if there are p-adic numbers $x$ and $y$ such that $ax^2+by^2=1$, and $-1$ otherwise. Then

{\rm (i)}   $(a,b)_p=(b,a)_p, \ (a,c^2)_p=1,$

{\rm (ii)}  $(a,-a)_p=1, \ (a,1-a)_p=1,$

{\rm (iii)}  $(aa',b)_p=(a,b)_p(a',b)_p,$ 

\noindent and if $p\neq 2$, then

{\rm (iv)}  $(r,s)_p=1$ if $r$ and $s$ are relatively prime to $p,$

{\rm (v)}   $(r,p)_p=(r/p),$ the Legendre symbol, if $r$ and $p$ are relatively prime,

{\rm (vi)}  $(p,p)_p=(-1/p).$
\end{thm}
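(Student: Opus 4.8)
The plan is to derive all six identities from the definition by first translating the affine condition into more flexible equivalent forms, and then separating the formal algebraic properties (valid for every prime $p$) from the explicit arithmetic of $\mathbb{Q}_p$ for odd $p$. First I would record the standard equivalence that, for nonzero $p$-adic $a,b$, the equation $ax^2+by^2=1$ has a solution in $\mathbb{Q}_p$ if and only if the homogeneous ternary form $z^2=ax^2+by^2$ has a nontrivial zero $(x,y,z)\neq(0,0,0)$; the forward direction is the point $(x,y,1)$, and the converse either dehomogenizes at $z\neq0$ or, when $z=0$, reduces to the case $b=-at^2$ in which a direct substitution produces an affine solution. This in turn is equivalent to $b$ lying in the image of the norm map $N\colon\mathbb{Q}_p(\sqrt a)^{*}\to\mathbb{Q}_p^{*}$ (with $(a,b)_p=1$ automatically when $a$ is a square), and passing freely among these three descriptions is what makes the formal identities transparent.

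For part (i), symmetry $(a,b)_p=(b,a)_p$ is immediate from the symmetry of the homogeneous form, while $(a,c^2)_p=1$ follows from the solution $x=0$, $y=1/c$. For part (ii) I would use the norm identities $-a=N(\sqrt a)=\sqrt a\cdot(-\sqrt a)$ and $1-a=N(1-\sqrt a)=(1-\sqrt a)(1+\sqrt a)$, which show that $-a$ and $1-a$ are norms and hence $(a,-a)_p=(a,1-a)_p=1$; equivalently one simply exhibits the isotropic vectors $(1,1,0)$ and $(1,1,1)$ for the respective ternary forms. These arguments use nothing about $p$ and so hold for $p=2$ as well.

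For the odd-$p$ statements (iv)--(vi) I would invoke the structure $\mathbb{Q}_p^{*}=p^{\mathbb{Z}}\times\mathbb{Z}_p^{*}$ with $\mathbb{Z}_p^{*}/(\mathbb{Z}_p^{*})^2$ of order $2$, together with Hensel's lemma (a unit is a square in $\mathbb{Z}_p$ iff it is a square mod $p$, for $p$ odd). For part (iv), writing $r,s\in\mathbb{Z}_p^{*}$, the sets $\{rx^2\bmod p\}$ and $\{1-sy^2\bmod p\}$ each have $(p+1)/2$ elements in $\mathbb{F}_p$, so they meet; a common value gives $rx_0^2+sy_0^2\equiv1\pmod p$, and since one of $x_0,y_0$ is a unit the relevant partial derivative is nonzero, so Hensel lifts the congruence to a genuine solution and $(r,s)_p=1$. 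For part (v), if $r$ is a square mod $p$ then it is a square in $\mathbb{Z}_p$ and $x=1/\sqrt r$, $y=0$ works, giving $(r,p)_p=1$; if $r$ is a nonresidue, scaling a hypothetical primitive solution of $z^2=rx^2+py^2$ and reducing mod $p$ forces $p\mid x$, then $p\mid z$, then $p\mid y$, contradicting primitivity, so $(r,p)_p=-1$. In both cases $(r,p)_p=(r/p)$. Part (vi) then follows formally: by (iii) and (ii), $(p,p)_p=(p,-1)_p(p,-p)_p=(p,-1)_p$, and by (i) and (v) this equals $(-1,p)_p=(-1/p)$.

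The genuinely hard step is bimultiplicativity (iii), $(aa',b)_p=(a,b)_p(a',b)_p$, since the three symbols involve three different quadratic extensions and no single substitution relates them. The conceptual content is that, for $a$ a nonsquare, the norm subgroup $N(\mathbb{Q}_p(\sqrt a)^{*})$ has index exactly $2$ in $\mathbb{Q}_p^{*}$; then $b\mapsto(a,b)_p$ is precisely the nontrivial character of $\mathbb{Q}_p^{*}$ with that kernel and is therefore multiplicative, and symmetry transfers multiplicativity to the first slot. For odd $p$ this index computation is exactly what (iv)--(vi) encode, and assembling them into the explicit formula $(a,b)_p=(-1)^{\alpha\beta\varepsilon(p)}(u/p)^{\beta}(v/p)^{\alpha}$ (with $a=p^{\alpha}u$, $b=p^{\beta}v$ and $\varepsilon(p)=(p-1)/2$) makes bimultiplicativity a one-line check. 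The main obstacle is the case $p=2$: there $\mathbb{Q}_2^{*}/(\mathbb{Q}_2^{*})^2$ has order $8$, Hensel's lemma for squares requires a congruence modulo $8$ rather than modulo $p$, and one must compute the symbol directly on the generating square classes (represented by $-1,2,5$) and verify bimultiplicativity from the resulting table. This is where the bulk of the careful $2$-adic bookkeeping lives.
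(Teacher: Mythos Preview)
The paper does not prove this theorem at all: it is quoted as a known result and attributed to Vinogradov \cite{vin}, with no argument given. There is therefore nothing in the paper to compare your proposal against.

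For what it is worth, your outline is the standard treatment (essentially that of Serre's \emph{A Course in Arithmetic}): reformulate $(a,b)_p$ via the ternary form $z^2=ax^2+by^2$ and via norms from $\mathbb{Q}_p(\sqrt a)$, read off (i) and (ii) from obvious isotropic vectors, obtain (iv)--(vi) for odd $p$ by a pigeonhole count over $\mathbb{F}_p$ plus Hensel lifting, and derive bimultiplicativity (iii) from the index-$2$ statement for the norm subgroup, with the $p=2$ case handled by a finite table on representatives of $\mathbb{Q}_2^{*}/(\mathbb{Q}_2^{*})^2$. The logic is sound; just be aware that in deducing (vi) from (iii) you are assuming (iii) has already been established, so your write-up should order the odd-$p$ argument as: prove (iv) and (v) first, use them to build the explicit closed formula for $(a,b)_p$, verify bimultiplicativity from that formula, and only then read off (vi). As written, the sentence ``Part (vi) then follows formally: by (iii) and (ii)\ldots'' appears before (iii) has been justified for odd $p$, which is a minor circularity in presentation rather than in content.
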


\begin{thm} {\rm (Shapiro \cite{Shapiro}).}\label{rational9} 
 There is a rational family of type $(s_1, \ldots, s_9)$ and order $16$ if and only if 
$S_p(s_1, \ldots, s_9):=\displaystyle\prod_{i<j}(s_i, s_j)_p=1 $ for every prime $p.$ 
 \end{thm}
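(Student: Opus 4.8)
The plan is to translate the combinatorial conditions on the nine matrices into a splitting condition for a central simple $\mathbb{Q}$-algebra and then apply a local--global principle. First I would package the family into the single linear matrix $M(x)=\sum_{i=1}^{9}x_iA_i$ in commuting indeterminates $x_1,\dots,x_9$. Conditions (i) and (ii) in the definition of a rational family are exactly equivalent to the identity $M(x)M(x)^{\rm T}=q(x)I_{16}$, where $q(x)=s_1x_1^2+\cdots+s_9x_9^2$ is the diagonal quadratic form $\langle s_1,\dots,s_9\rangle$. Thus a rational family of type $(s_1,\dots,s_9)$ and order $16$ exists if and only if the $9$-dimensional form $q$ admits a rational composition into the standard $16$-dimensional form, i.e.\ a representation $e_i\mapsto A_i$ of the Clifford algebra of $q$ (its canonical involution playing the role of transpose) on $\mathbb{Q}^{16}$.

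Second, since $\dim q=9$ is odd, I would pass to the even Clifford algebra $C_0(q)$, which is a central simple $\mathbb{Q}$-algebra of degree $2^4=16$. Concretely, normalizing by the invertible matrix $A_9$ (note $A_9A_9^{\rm T}=s_9I_{16}$) and setting $B_i=A_9^{-1}A_i$ for $i=1,\dots,8$ produces the eight generators of a $C_0(q)$-module on $\mathbb{Q}^{16}$. Writing $C_0(q)\cong M_r(D)$ with $D$ a division algebra of index $d$ and $rd=16$, every faithful $C_0(q)$-module has $\mathbb{Q}$-dimension divisible by $16d$; hence a $16$-dimensional module exists precisely when $d=1$, that is, when $C_0(q)$ is split. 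Consequently the family exists if and only if $C_0(q)\cong M_{16}(\mathbb{Q})$.

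Third, I would compute the Brauer class of $C_0(q)$ from the weights. Using the explicit presentation of the even Clifford algebra of a diagonal form and the multiplicativity and symmetry rules for Hilbert symbols in Theorem \ref{Hilbert}, one checks that the splitting condition for $C_0(q)$ at a prime $p$ reduces exactly to $\prod_{i<j}(s_i,s_j)_p=S_p(s_1,\dots,s_9)=1$ (the discriminant contribution being absorbed for this dimension). By the Albert--Brauer--Hasse--Noether theorem (equivalently, Hasse--Minkowski applied to the associated form), $C_0(q)$ splits over $\mathbb{Q}$ if and only if it splits over every completion. Because each $s_i$ is a positive rational, $(s_i,s_j)_\infty=1$ for all $i,j$, so the archimedean place imposes no condition --- consistent with the fact that a real Hurwitz--Radon family of nine matrices always exists in dimension $16$, where $\rho(16)=9$. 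Hence global splitting is equivalent to $S_p=1$ for every finite prime $p$, which is the assertion.

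The main obstacle is carrying out the second and third steps rigorously: establishing the precise dictionary between the matrix identity $MM^{\rm T}=qI_{16}$ (with transpose) and a genuine module over $C_0(q)$ --- tracking the involution and the normalization by $A_9$ so that the $B_i$ really generate the even Clifford algebra and not a twist of it --- and then pinning down the Brauer class exactly as $\prod_{i<j}(s_i,s_j)_p$ rather than up to an unaccounted discriminant factor. Once the algebra and its invariant are correctly identified, the local--global descent and the evaluation of the Hilbert symbols through the rules in Theorem \ref{Hilbert} are routine.
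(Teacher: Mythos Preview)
The paper does not supply a proof of this theorem; it is quoted from Shapiro's thesis \cite{Shapiro} and invoked only as a black box in the proof of Theorem~\ref{pdnon}. There is therefore no in-paper argument to compare your proposal against.

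For what it is worth, your outline is essentially the method of the original source. Shapiro's thesis treats rational families (spaces of similitudes) via Clifford algebras: the identity $M(x)M(x)^{\rm T}=q(x)I_{16}$ encodes a composition of $q=\langle s_1,\dots,s_9\rangle$ with the standard form on $\mathbb{Q}^{16}$; for odd-dimensional $q$ this is governed by the even Clifford algebra $C_0(q)$, central simple of degree $16$ over $\mathbb{Q}$; a $16$-dimensional module exists exactly when $C_0(q)$ splits; and the local obstruction is the Hasse invariant $\prod_{i<j}(s_i,s_j)_p$, globalized by Albert--Brauer--Hasse--Noether. The obstacle you flag is real and is precisely what Shapiro's machinery handles: $M(x)M(x)^{\rm T}=q(x)I$ is a composition with the \emph{bilinear space} $(\mathbb{Q}^{16},\langle 1_{(16)}\rangle)$, not a bare Clifford module, so the canonical involution must be matched to transpose; once the target is the standard positive-definite form this adds no constraint beyond splitting of $C_0(q)$. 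Your observation that the archimedean place imposes nothing because all $s_i>0$ is also correct.
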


\begin{thm} {\rm (Shapiro \cite{Shapiro}).}\label{rat}
 There is a rational family of type $(s_1, \ldots, s_k)$ and order $2^ab$,  $b$ odd, if and only if there is a rational family
of the same type and order $2^a.$
\end{thm}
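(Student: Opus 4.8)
The statement is an equivalence whose two directions are of very different character, so the plan is to dispose of the implication ``order $2^a$ $\Rightarrow$ order $2^ab$'' by a direct tensor construction, and to treat the converse by reducing the existence of a rational family to a system of local ($p$-adic) conditions that turn out to depend on the order only through its $2$-adic part. First I would prove the easy direction. Suppose $A_1,\dots,A_k$ is a rational family of order $2^a$ and type $(s_1,\dots,s_k)$, and set $A_i'=A_i\otimes I_b$, a rational matrix of order $2^ab$. Since $(A_i\otimes I_b)^{\rm T}=A_i^{\rm T}\otimes I_b$ and tensor products multiply blockwise, we get $A_i'A_i'^{\rm T}=(A_iA_i^{\rm T})\otimes I_b=s_iI_{2^ab}$ and, for $i\neq j$, $A_i'A_j'^{\rm T}=(A_iA_j^{\rm T})\otimes I_b=-(A_jA_i^{\rm T})\otimes I_b=-A_j'A_i'^{\rm T}$. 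Thus $A_1',\dots,A_k'$ is a rational family of order $2^ab$ of the very same type, and this direction requires no arithmetic.

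For the converse I would first recast the notion of a rational family as a statement about quadratic forms. Forming the pencil $\Phi(x)=\sum_{i=1}^k x_iA_i$ in commuting indeterminates $x_i$, conditions (i) and (ii) are together equivalent to the single identity $\Phi(x)\Phi(x)^{\rm T}=\big(\sum_{i=1}^k s_i x_i^2\big)I_n$. Hence a rational family of type $(s_1,\dots,s_k)$ and order $n$ is precisely a rational realization of the diagonal form $\langle s_1,\dots,s_k\rangle$ as a scalar multiple of the identity of size $n$, and its existence is an invariant of the similarity class of that form together with $n$. I would then invoke a Hasse--Minkowski local--global principle for such systems: the realization exists over $\mathbb{Q}$ if and only if it exists over $\mathbb{R}$ and over every completion $\mathbb{Q}_p$. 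The prototype of the resulting global criterion is already visible in Theorem \ref{rational9}, where for $k=9$, $n=16$ existence is equivalent to $\prod_{i<j}(s_i,s_j)_p=1$ for all $p$; the general criterion has the same shape, assembled from the Hilbert symbols of Theorem \ref{Hilbert} together with the discriminant and Hasse invariant of $\langle s_1,\dots,s_k\rangle$.

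The decisive step is to verify that each local condition depends on the order $n=2^ab$ only through the exponent $a$, never through the odd factor $b$. At the archimedean place the condition is automatic, since the $s_i$ are positive by definition and so $\langle s_1,\dots,s_k\rangle$ is positive definite irrespective of $n$. At an odd prime $p$ the order contributes to the local invariants only through factors of the form $(\,\cdot\,,b)_p$, and these collapse when the symbol products are regrouped using parts (iii)--(v) of Theorem \ref{Hilbert}, so that replacing $n$ by $n/b$ leaves the $p$-adic condition unchanged. At $p=2$ the only surviving datum is the $2$-adic valuation $a$ itself. Consequently the full system of local conditions for order $2^ab$ is literally identical to that for order $2^a$, and the local--global principle then produces a rational family of the given type at order $2^a$ exactly when one exists at order $2^ab$. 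This is the exact analogue, for general rational types, of the elementary fact that the Radon--Hurwitz number satisfies $\rho(2^ab)=\rho(2^a)$.

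I expect the main obstacle to lie in the two ingredients that drive the converse: first, establishing the local--global principle for rational \emph{families} (rather than for a single quadratic form), and second, carrying out the $p$-adic bookkeeping that shows the odd part $b$ cancels out of every Hilbert-symbol condition. An alternative, more structural route to the same conclusion is to read the relations (i)--(ii) as defining a representation of a rational algebra which, being semisimple by Wedderburn, has irreducible modules all of $\mathbb{Q}$-dimension a fixed power of two $c$ determined by the type (the reflection of the fact that the obstruction is purely $2$-adic); since the attainable orders are exactly the positive multiples of $c$, and $c\mid 2^ab$ forces $c\mid 2^a$ whenever $b$ is odd, the equivalence follows at once. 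Either way, the heart of the matter is that the obstruction to existence is a $2$-adic and real phenomenon, entirely blind to the odd part of the order.
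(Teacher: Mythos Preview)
The paper does not supply a proof of this theorem; it is quoted as a result of Shapiro (his 1974 Berkeley thesis) and invoked as a black box in the nonexistence argument of Theorem~\ref{pdnon}. There is therefore nothing in the paper to compare your attempt against.

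On its own merits: your forward direction by tensoring with $I_b$ is correct and complete. For the converse, the first route you sketch---a Hasse--Minkowski principle for rational families---is not yet a proof: you never write down what the local conditions at each prime actually are, so the assertion that the odd factor $b$ ``collapses'' out of every Hilbert-symbol product is claimed rather than checked, and you correctly flag the local--global step itself as the real obstacle. Your second route, reading the relations (i)--(ii) as a module over a Clifford-type $\mathbb{Q}$-algebra and arguing that the admissible orders are exactly the multiples of a fixed power of two, is essentially the line Shapiro develops (under the heading of \emph{similarities}) and is the right way to finish. Two points to tighten there: semisimplicity is not ``by Wedderburn'' (Wedderburn describes the structure of semisimple algebras, it does not certify semisimplicity); you need that the relevant algebra is a Clifford algebra of a nondegenerate form over $\mathbb{Q}$, hence semisimple. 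And when that algebra splits as a product of two simple factors you must note that the two simple modules have equal $\mathbb{Q}$-dimension, so that the attainable orders really are all multiples of a single $2$-power rather than nonnegative combinations of two distinct ones.
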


\begin{thm} {\rm (Robinson \cite{Robnon}).}\label{nonexistence}
 There does not exist any $OD\big(n; \ 1_{(5)},n-5\big)$ for $ n>40.$
\end{thm}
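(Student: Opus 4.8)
The plan is to resolve the design into its component matrices, reduce to the essential arithmetic case by Radon--Hurwitz, and then extract a contradiction from the disjointness (fullness) of the configuration. Write an $OD(n;\,1_{(5)},n-5)$ as $C=x_1B_1+\cdots+x_5B_5+x_6E$, where $B_1,\dots,B_5,E$ are pairwise disjoint $\{0,\pm1\}$ matrices. The identity $CC^{\rm T}=\big(\sum_j c_jx_j^2\big)I_n$ is then equivalent to $B_iB_i^{\rm T}=I_n$ (so each $B_i$ is a signed permutation matrix), $EE^{\rm T}=(n-5)I_n$, the anti-amicabilities $B_iB_j^{\rm T}=-B_jB_i^{\rm T}$ and $B_iE^{\rm T}=-EB_i^{\rm T}$, together with fullness $|B_1|+\cdots+|B_5|+|E|=J_n$, the all-ones matrix. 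Since the design carries six variables, the Radon--Hurwitz bound forces $6\le\rho(n)$, hence $8\mid n$. Thus for every $n>40$ not divisible by $8$ the statement is immediate, and it remains only to exclude the orders $n=48,56,64,\dots$.

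First I would normalize. Multiplying $C$ on the left by the signed permutation $B_1^{\rm T}$ yields an $OD$ of the same type whose first component is $I_n$ and preserves disjointness; the relations above then show the new $B_2,\dots,B_5$ and $E$ are all skew, with $B_i^2=-I_n$, $E^2=-(n-5)I_n$, and $B_2,\dots,B_5,E$ pairwise anticommuting. Rescaling $E$ by $(n-5)^{-1/2}$ exhibits five mutually anticommuting orthogonal complex structures on $\mathbb R^n$, recovering $\rho(n)\ge 6$ intrinsically. To organize the remaining data I would pass to the commutant of $\{B_2,B_3,B_4,B_5\}$: these four generators span a real Clifford algebra isomorphic to $M_2(\mathbb H)$, whose commutant in $M_n(\mathbb R)$ is $M_{n/8}(\mathbb H)$, and the product $E\,B_2B_3B_4B_5$ lands in this commutant as a $\{0,\pm1\}$-built element $M$ with $MM^{\rm T}=(n-5)I_n$.

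This quaternionic reduction confirms the divisibility but cannot by itself bound $n$: the Hilbert-symbol products $S_p$ for the type $(1_{(5)},n-5)$ equal $1$ for every prime $p$ (the ten pairs among the $1$'s and the five pairs $(1,n-5)_p$ are all trivial), so no rational-family obstruction is available, and the real constraint must come from the integrality and disjointness that the algebraic picture discards. This is the step I expect to be the main obstacle. Recording each $B_i$ by its underlying permutation $\sigma_i$ and its sign vector, anti-amicability of $B_i$ and $B_j$ says exactly that $\sigma_j^{-1}\sigma_i$ is a fixed-point-free, sign-skew involution; the five weight-one variables thus form a system of five pairwise ``orthogonal'' signed permutations, while $E$ is a skew $\{0,\pm1\}$ matrix supported precisely on the complementary $n(n-5)$ cells with $EE^{\rm T}=(n-5)I_n$.

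Finally I would analyze short row interactions of this configuration. Fixing two or three rows and counting, column by column, the local $\pm1$ patterns simultaneously compatible with $B_iE^{\rm T}+EB_i^{\rm T}=0$ and with the off-diagonal vanishing in $EE^{\rm T}=(n-5)I_n$ yields a system of linear and quadratic constraints in $n$; driving this bookkeeping to an inequality that fails once $n\ge 48$ is the delicate heart of the argument. The finitely many surviving orders $n\in\{8,16,24,32,40\}$ are then examined directly, which pins the threshold at $n>40$.
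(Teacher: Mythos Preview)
The paper does not prove this theorem; it is quoted from Robinson's 1976 paper and used as a black box in the proof of Theorem~\ref{pdnon}. There is therefore no in-paper argument to compare your approach against.

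As for the proposal itself: the setup is sound. The decomposition into signed permutation matrices $B_1,\dots,B_5$ and a weighing-type matrix $E$, the normalization to $B_1=I_n$, the Clifford-algebraic reading of the anti-amicability relations, and the observation that Radon--Hurwitz forces $8\mid n$ are all correct and standard. Your remark that no Hilbert-symbol obstruction is available for the type $(1_{(5)},n-5)$ is also correct and worth stating, since it explains why the argument must be combinatorial rather than purely algebraic.

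But what you have written is not a proof. You carry the analysis precisely up to the point where the real content begins, and then say that ``driving this bookkeeping to an inequality that fails once $n\ge 48$ is the delicate heart of the argument.'' That sentence is a description of what remains to be done, not a proof of it. The phrases ``short row interactions'' and ``a system of linear and quadratic constraints in $n$'' are placeholders: you have not written down a single explicit identity or inequality arising from the simultaneous conditions $B_iE^{\rm T}+EB_i^{\rm T}=0$ and $EE^{\rm T}=(n-5)I_n$, nor indicated which rows to fix, what quantity to count, or why the resulting bound is $40$ rather than some other number. Until that step is actually carried out, this is an outline of where a proof might be found, not a proof.
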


\begin{thm} {\rm (Kharaghani and  Tayfeh-Rezaie \cite{kht}).}\label{od32}
 There is a full $OD\big(32; \ 1_{(5)}, u_1, \ldots, u_k\big)$ if and only if $(u_1,\ldots, u_k) = (9, 9, 9)$ or $(9,18)$ or $(12, 15)$ or $(27)$.
\end{thm}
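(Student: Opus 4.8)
The statement is a classification, so the plan is to prove the two directions separately: realizability of the four listed types, and non-realizability of every other type. I would attack the harder ``only if'' direction first, reducing it to a finite check via the elementary constraints. Since the design is \emph{full} of order $32$, equating every variable to $1$ produces a Hadamard matrix $M$ of order $32$ with $MM^{\rm T}=(5+\sum_i u_i)I_{32}$; comparing diagonal entries forces $\sum_i u_i=27$. Moreover an orthogonal design of order $32=2^{4\cdot 1+1}$ carries at most $\rho(32)=8\cdot 1+2^{1}=10$ variables, so $5+k\le 10$, i.e. $k\le 5$, with each $u_i\ge 1$. Hence $(u_1,\dots,u_k)$ ranges over the finitely many partitions of $27$ into at most five positive parts, and everything reduces to eliminating all of them except $(9,9,9)$, $(9,18)$, $(12,15)$ and $(27)$.

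To kill the bulk of these partitions I would use $p$-adic obstructions. The coefficient matrices of an $OD(32;1_{(5)},u_1,\dots,u_k)$ form a rational family of order $32$ and type $(1,1,1,1,1,u_1,\dots,u_k)$: they are $\{0,\pm1\}$ matrices $A_i$ with $A_iA_i^{\rm T}=c_iI$ and $A_iA_j^{\rm T}=-A_jA_i^{\rm T}$. Feeding this into the Shapiro theory of rational families --- Theorems~\ref{rational9} and~\ref{rat} together with their order-$32$ counterpart --- yields, for every prime $p$, the necessary condition $\prod_{i<j}(s_i,s_j)_p=1$ on the resulting type; evaluating these Hilbert symbols with the rules of Theorem~\ref{Hilbert} (the $p=2$ symbol, and the odd-prime symbols at primes dividing some $u_i$) discards a large fraction of the partitions of $27$. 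I would also invoke Theorem~\ref{nonexistence} and the standard non-existence results for two- and three-variable orthogonal designs at the auxiliary orders $2,4,8,16$ to dispatch most of the remaining candidates with few parts.

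The partitions surviving all of this, and the four constructions, I would handle concretely. For each residual ``hard'' type one must show that \emph{no full} design exists --- the fullness hypothesis is exactly what the earlier, purely rational arguments do not see --- and I would settle these by an exhaustive computer search: a full orthogonal design of order $32$ singles out a Hadamard-equivalence class of order-$32$ Hadamard matrices together with a variable/sign pattern, so one can run through the (known) classification of such matrices and test each admissible pattern, or search directly over two-block and circulant--back-circulant constructions. For the ``if'' direction I would exhibit the four designs explicitly; natural sources are full orthogonal designs of orders $2,4,8,16$ combined through the doubling $C\mapsto\left[\begin{smallmatrix}C&D\\D&-C\end{smallmatrix}\right]$ or through Theorem~\ref{pdod} applied to a $PD(8;1_{(3)};1_{(3)};5)$ (or a product design of order $16$) with suitable amicable designs, the arithmetic $27=9+9+9=9+18=12+15$ matching exactly such products; any leftover case is finished by computer and its fullness and type verified by inspection.

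The main obstacle is this last step. The rational-family / Hilbert-symbol conditions are \emph{necessary} but not \emph{sufficient}, so once they have done their work there remains a genuine gap between ``arithmetically admissible'' and ``realizable as a full design of order $32$,'' and closing that gap appears to require either sharper structural invariants of full orthogonal designs of order $32$ or a sizable computer-assisted enumeration; by comparison the numerical constraints of the first step, the Hilbert-symbol arithmetic of the second, and the verification of the four explicit designs are routine.
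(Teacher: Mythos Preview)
The paper does not contain a proof of this statement: Theorem~\ref{od32} is quoted from Kharaghani and Tayfeh-Rezaie~\cite{kht} and used as a black box in the proof of Theorem~\ref{pdnon}, with no argument supplied in the present paper. There is therefore nothing here to compare your proposal against.

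That said, as a sketch of how such a classification is typically obtained your outline is reasonable: the fullness forces $\sum u_i=27$, the Radon--Hurwitz bound gives $k\le 5$, Shapiro-type rational-family/Hilbert-symbol conditions prune most partitions of $27$, and the residue is handled by explicit constructions together with a computer search. Whether this matches the actual argument in~\cite{kht} cannot be judged from the paper at hand; if you need the detailed proof you must consult that reference directly.
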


\begin{thm}\label{pdnon}
 There does not exist any $PD\big(n;  \ 1_{(3)}; 1_{(3)};  n-3\big)$ for $n\neq 4,8,12.$
\end{thm}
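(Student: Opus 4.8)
The plan is to assume, for contradiction, that a $PD\big(n;1_{(3)};1_{(3)};n-3\big)$ exists for some $n\notin\{4,8,12\}$, and to extract from it an orthogonal design of order $2n$ with too many variables, so that Theorem~\ref{nonexistence} or Theorem~\ref{od32} is violated. First I would note that by definition of a product design, $(M_1;M_2;N)$ with $M_1,M_2$ of type $1_{(3)}$ and $N$ of type $n-3$ means that $M_1,M_2$ are $OD(n;1,1,1)$'s disjoint from the $OD(n;n-3)$ called $N$, that $M_1+N$ and $M_2+N$ are ODs (necessarily of type $(1,1,1,n-3)$), and that $M_1M_2^{\rm T}=M_2M_1^{\rm T}$. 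Observe that $M_1+N$ being an $OD(n;1,1,1,n-3)$ forces $n\equiv 0\pmod 4$, and the existence of an $OD(n;1,1,1,n-3)$ already restricts $n$; but the real leverage comes from combining the two halves. Since $M_1+N$ is an $OD(n;1,1,1,n-3)$ with variables $x_1,x_2,x_3,z$ and $M_2+N$ is an $OD(n;1,1,1,n-3)$ with (disjoint) variables $y_1,y_2,y_3,z$ sharing the \emph{same} $N$ for the $z$-part, and since $M_1M_2^{\rm T}=M_2M_1^{\rm T}$ while $M_iN^{\rm T}+NM_i^{\rm T}$ is forced by $(M_i+N)(M_i+N)^{\rm T}$ being scalar, I would check that $\big(M_1+N;\,M_2\big)$ or a similar pairing is an amicable-type configuration, or more directly that the matrix $\left[\begin{array}{rr}M_1+N&M_2\\ M_2&-(M_1+N)\end{array}\right]$-style block construction yields an $OD(2n;1,1,1,1,1,n-3)$, i.e.\ an $OD\big(2n;1_{(5)},(2n)-5-\cdots\big)$ — I need to track the type carefully, but the point is that the three $x$-variables, the three $y$-variables minus the shared structure, and $z$ combine to give an OD of order $2n$ with five variables of weight $1$ and one large variable.

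Concretely, I expect the cleanest route is: from $(M_1;M_2;N)$ form the order-$2n$ array $\left[\begin{array}{rr}M_1+N&M_2\\ M_2&-(M_1+N)\end{array}\right]$ and verify using (i)--(iii) of the PD definition that its Gram matrix is $\big(x_1^2+x_2^2+x_3^2+y_1^2+y_2^2+y_3^2+(n-3)z^2\big)I_{2n}$ only if certain cross terms vanish — in fact the shared $N$ makes the off-diagonal blocks interact, so I would instead use $\left[\begin{array}{rr}M_1&M_2+N\\ M_2+N&-M_1\end{array}\right]$ or appeal directly to Theorem~\ref{pdod} with a trivial AOD. Using Theorem~\ref{pdod} with the $PD\big(n;1_{(3)};1_{(3)};n-3\big)$ and the $AOD\big(1;1;1\big)$ (the $1\times1$ case, $C=[x]$, $D=[y]$), parts (i)--(iv) collapse and I get an $OD\big(n;1,1,1,n-3\big)$ plus, combining with $AOD\big(2;1_{(2)};1_{(2)}\big)$ as in Robinson's $OD(24;1_{(6)},9_{(2)})$ example, an $OD\big(2n;1_{(6)},(n-3)_{(2)}\big)$; setting $n-3$ equal to one variable and noting $2(n-3) = 2n-6$ gives in particular a full-ish $OD\big(2n;1_{(5)},\ldots\big)$. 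The resulting type, once I pick the right specialization, is $OD\big(2n;1_{(5)},2n-5\big)$ (absorbing five of the six unit variables and the large part), and by Theorem~\ref{nonexistence} this forces $2n\le 40$, i.e.\ $n\le 20$, leaving only $n\in\{4,8,12,16,20\}$ to handle by hand.

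For the remaining small cases I would rule out $n=16$ and $n=20$ separately. For $n=20$: an $OD\big(20;1,1,1,17\big)$ would need to exist as $M_1+N$; I would invoke the known non-existence of $OD(20;1,1,1,17)$ (or derive it from the Radon--Hurwitz bound $\rho(20)=4$ together with a $p$-adic obstruction via Theorem~\ref{Hilbert}, since three unit variables plus a weight-$17$ variable on order $20$ is a rational-family/Hilbert-symbol question). For $n=16$: here $M_1+N$ would be an $OD\big(16;1,1,1,13\big)$ and the doubled design an $OD\big(32;1_{(5)},\ldots\big)$; I would compare against Theorem~\ref{od32}, which lists all full $OD(32;1_{(5)},u_1,\ldots,u_k)$ as having $(u_1,\ldots,u_k)\in\{(9,9,9),(9,18),(12,15),(27)\}$ — none of which is a single part of size $27$ arising as $2\cdot13+1=27$ with the wrong companion structure, or more precisely none matches the type the PD would force, giving the contradiction. (If the PD for $n=16$ would yield type $(27)$, I would instead use the rational-family criterion of Theorems~\ref{rational9} and~\ref{rat}: a $PD(16;1_{(3)};1_{(3)};13)$ carries an associated rational family of order $16$ and type $(1,1,1,1,1,1,13,\ldots)$ whose Hilbert-symbol product $S_p$ fails to be $1$ at $p=2$.)

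The main obstacle I anticipate is bookkeeping the exact type of the order-$2n$ OD produced from the PD — i.e.\ whether the shared part $N$ forces the two copies of $n-3$ to merge into a single variable of weight $2(n-3)$ or to stay separate as $(n-3)_{(2)}$ — because the applicable non-existence theorem (Theorem~\ref{nonexistence} wants exactly $1_{(5)},n-5$) is sensitive to this. Getting a genuine $1_{(5)}$-with-one-large-part design out of the construction, rather than something with six small variables, is the crux; I expect this is handled precisely by the PD axioms (i)--(iii), which are engineered so that $M_1+N$, $M_2+N$ and the amicability $M_1M_2^{\rm T}=M_2M_1^{\rm T}$ interlock to let one of the unit variables from the second block be absorbed, after which Theorem~\ref{nonexistence} closes out all $n>20$ and the finite check disposes of $n=16,20$.
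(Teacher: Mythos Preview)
Your overall plan --- feed the hypothetical $PD(n;1_{(3)};1_{(3)};n-3)$ into Theorem~\ref{pdod} together with a small AOD and then invoke a non-existence theorem --- is exactly the paper's strategy, and your treatment of the range $n>20$ is essentially right. Using $AOD(2;1_{(2)};1_{(2)})$ in Theorem~\ref{pdod}(i) (with $v=w=1$, $c=2$, $u_1=n-3$) gives directly an $OD\big(2n;1_{(6)},2(n-3)\big)=OD\big(2n;1_{(6)},2n-6\big)$; equating one unit variable with the large one yields $OD\big(2n;1_{(5)},2n-5\big)$, and Theorem~\ref{nonexistence} kills this for $2n>40$. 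So your closing worry about ``whether the two copies of $n-3$ merge'' is unfounded: part~(i) of Theorem~\ref{pdod} hands you the merged weight $2n-6$ for free. Your reduction of the non-multiples of $4$ via $\rho(n)$ is also fine, and for $n=16$ the point is simply that $OD(32;1_{(6)},26)=OD(32;1_{(5)},1,26)$ is full and $(1,26)$ is not on the list in Theorem~\ref{od32}; your hedging about a possible type $(27)$ is unnecessary.

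The genuine gap is $n=20$. You propose to kill it by asserting that $OD(20;1,1,1,17)$ does not exist, but you neither prove this nor cite it, and the Radon--Hurwitz bound $\rho(20)=4$ does \emph{not} exclude it. The $AOD(2;\cdot;\cdot)$ route only gives $OD(40;1_{(6)},34)$, and $40\le 40$ so Theorem~\ref{nonexistence} is silent. The paper handles $n=20$ by a sharper move: apply Theorem~\ref{pdod} with the richer $AOD\big(4;1_{(2)},2;1_{(2)},2\big)$ to produce an $OD\big(80;1_{(3)},3_{(3)},17_{(2)},34\big)$, hence (Theorem~\ref{rat}) a rational family of that nine-term type in order $16$; then compute via Theorem~\ref{Hilbert} that $S_{17}\big(1_{(3)},3_{(3)},17_{(2)},34\big)=-1$, contradicting Theorem~\ref{rational9}. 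Without this (or an independent proof that $OD(20;1,1,1,17)$ fails), your argument for $n=20$ does not close.
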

\begin{proof}
{If there exists a $PD\big(n; \ 1_{(3)}; 1_{(3)}; n-3\big) $ for some $ n>20,$ then applying Theorem \ref{pdod} with an $AOD\big(2;  1_{(2)}; 1_{(2)}\big)$ gives an $OD\big(2n; \ 1_{(6)},2n-6\big)$ 
which contradicts Theorem \ref{nonexistence}. 
From the definition of PD, there is no $PD\big(n; \ 1_{(3)}; 1_{(3)}; n-3\big)$ for $n=1$, $2$, $3$, $5$, $6$, $7$, $9$, $10$, $11$, $13$, $14$, $15$, $17$, $18$ and $19$. If $n=16,$ then from the above argument, there exists an $OD\big(32; \ 1_{(6)},26\big)$ which is impossible by Theorem \ref{od32}. Thus, there is no $PD\big(16; \ 1_{(3)}; 1_{(3)}; 13\big)$.
Now suppose that there exists a $PD\big(20; \ 1_{(3)};  1_{(3)};  17\big).$ 
Applying Theorem \ref{pdod} to this PD and an $AOD\big(4; \ 1_{(2)},2;  1_{(2)},2\big)$ (see \cite[Chapter 5]{GS} for the existence of this AOD) gives an $OD(80; 1_{(3)},3_{(3)},17_{(2)},34).$ Hence, by Theorem \ref{rat},
 there is a rational family  of type $(1_{(3)},3_{(3)},17_{(2)},34)$ and order $16$.
It can be seen that Theorem \ref{Hilbert}  gives $S_{17}(1_{(3)},3_{(3)},17_{(2)},34)=-1$ which contradicts Theorem \ref{rational9}. Therefore, there does not exist any $PD\big(20;  \ 1_{(3)}; 1_{(3)};  17\big)$.} 
\end{proof}

%%%%%%%%%%%%%%%%%%%%%%%%%%%%%%%%%%%%%%%%%%%%%%%%%%%%%%%%%%%%%%%%%%%%%%%
\section{Some full amicable orthogonal designs}
%%%%%%%%%%%%%%%%%%%%%%%%%%%%%%%%%%%%%%%%%%%%%%%%%%%%%%%%%%%%%%%%%%%%%%%%%%%%%%%
In this section, we combine techniques similar to \cite{KHH} and \cite{Robinson} to obtain some classes of full amicable orthogonal designs. 
\begin{cons}{\label{construction1}\rm
Suppose that $A_1$, $A_2$, $B$, $C$, $D$, $E$, $F$, $G$ are square matrices of order $n$, and ${\bf0}$ is the zero matrix. Let
$$M_1=\left[\begin{array}{rrrr|rrrr}
{\bf0}&D&B&C&{\bf0}&{\bf0}&{\bf0}&{\bf0}\\
-D&{\bf0}&-C&B&{\bf0}&{\bf0}&{\bf0}&{\bf0}\\
B&-C&{\bf0}&D&{\bf0}&{\bf0}&{\bf0}&{\bf0}\\
C&B&-D&{\bf0}&{\bf0}&{\bf0}&{\bf0}&{\bf0}\\
\hline
{\bf0}&{\bf0}&{\bf0}&{\bf0}&{\bf0}&-D&B&C\\
{\bf0}&{\bf0}&{\bf0}&{\bf0}&D&{\bf0}&-C&B\\
{\bf0}&{\bf0}&{\bf0}&{\bf0}&B&-C&{\bf0}&-D\\
{\bf0}&{\bf0}&{\bf0}&{\bf0}&C&B&D&{\bf0}
\end{array}\right],$$

$$ M_2=\left[\begin{array}{rrrr|rrrr}
{\bf0}&G&E&F&{\bf0}&{\bf0}&{\bf0}&{\bf0}\\
-G&{\bf0}&F&-E&{\bf0}&{\bf0}&{\bf0}&{\bf0}\\
E&F&{\bf0}&-G&{\bf0}&{\bf0}&{\bf0}&{\bf0}\\
F&-E&G&{\bf0}&{\bf0}&{\bf0}&{\bf0}&{\bf0}\\
\hline
{\bf0}&{\bf0}&{\bf0}&{\bf0}&{\bf0}&-E&F&G\\
{\bf0}&{\bf0}&{\bf0}&{\bf0}&E&{\bf0}&G&-F\\
{\bf0}&{\bf0}&{\bf0}&{\bf0}&F&G&{\bf0}&E\\
{\bf0}&{\bf0}&{\bf0}&{\bf0}&G&-F&-E&{\bf0}
\end{array}\right], $$

$$
N_i=\left[\begin{array}{rrrr|rrrr}
A_i&{\bf0}&{\bf0}&{\bf0}&A_i&-A_i&A_i&A_i\\
{\bf0}&A_i&{\bf0}&{\bf0}&A_i&A_i&A_i&-A_i\\
{\bf0}&{\bf0}&-A_i&{\bf0}&-A_i&-A_i&A_i&-A_i\\
{\bf0}&{\bf0}&{\bf0}&-A_i&-A_i&A_i&A_i&A_i\\
\hline
-A_i&-A_i&-A_i&-A_i&A_i&{\bf0}&{\bf0}&{\bf0}\\
A_i&-A_i&-A_i&A_i&{\bf0}&A_i&{\bf0}&{\bf0}\\
A_i&A_i&-A_i&-A_i&{\bf0}&{\bf0}&-A_i&{\bf0}\\
A_i&-A_i&A_i&-A_i&{\bf0}&{\bf0}&{\bf0}&-A_i
\end{array}\right],$$

\noindent for $i\in\{1,2\}.$ Now suppose that  $A_1, A_2, B, C, D, E, F$ and $ G$ are pairwise amicable (not necessary ODs),
and they satisfy the following properties:
\begin{align}\label{5a}
&5A_1A_1^{\rm T}+BB^{\rm T}+CC^{\rm T}+DD^{\rm T}=kI_n, \\ \label{5b}
&5A_2A_2^{\rm T}+EE^{\rm T}+FF^{\rm T}+GG^{\rm T}=sI_n,
\end{align}
where $k$ and $s$ are two quadratic forms. Let 
\begin{align}\label{IPQR}
I:=\left[ \begin {array}{rr} 1&0\\ 0&1\end {array} \right]\!, \ \  P:= \left[ \begin {array}{rr} 0&1\\ 1&0\end {array} \right]\!,  \ \ Q:= \left[ \begin {array}{rr} 1&0\\ 0&-1\end {array} \right]\!, \ \ R:= \left[ \begin {array}{rr} 0&1\\ -1&0\end {array} \right]\!.
\end{align}
Note that $PQ^{\rm T}=-QP^{\rm T}$, $RI^{\rm T}=-IR^{\rm T}$,  $PR^{\rm T}=RP^{\rm T}$, $PI^{\rm T}=IP^{\rm T}$, $QI^{\rm T}=IQ^{\rm T}$, $RQ^{\rm T}=QR^{\rm T}$, $N_1N_2^{\rm T}=N_2N_1^{\rm T}$, $M_1M_2^{\rm T}=M_2M_1^{\rm T}$ and for $i, j\in \{1,2\}$, $M_jN_i^{\rm T}=-N_iM_j^{\rm T}$. Also, $I*P={\bf0}$, $Q*R={\bf0}$, $M_j*N_i={\bf0}$  for $i,j\in \{1,2\}$. Using Equations \eqref{5a} and \eqref{5b} and the above properties, it can be verified that the following matrices form an AOD of order $16n$:
\begin{align}\label{dis}
 U=N_1\otimes I+M_1\otimes Q, \ \ \ \  V=N_2\otimes P+M_2\otimes R^{\rm T},
\end{align}
where $\otimes$ is the Kronecker product. Moreover, it can be seen that $U$ and $V$ in Equation \eqref{dis}, are disjoint.

Assume that the matrices $A_1$, $A_2$, $B$, $C$, $D$, $E$, $F$ and $G$ are 
full (no zero entries) pairwise amicable, and $H$ is a Hadamard matrix of order $2$. Then the following matrices form a full AOD of order $16n$:
\begin{align}\label{amicable1}
 U_H=N_1\otimes H+M_1\otimes QH, \ \ \ \ V_H=N_2\otimes PH+M_2\otimes R^{\rm T}H.
\end{align}}
\end{cons}

\begin{ex}{\label{aodfull48} \rm
Consider $A_1={\rm backcirc}(x,-b,b)$, $A_2={\rm circ}(-d,d,d)$, $B={\rm circ}(b,b,b)$, $C={\rm circ}(-a,b,b)$, $D={\rm circ}(a,b,b)$, $E={\rm circ}(d,d,d)$, $F={\rm circ}(-c,d,d)$ and  $G={\rm circ}(c,d,d).$ 
It can be seen that the conditions of Construction \ref{construction1} hold, and so matrices $U_H$ and $V_H$ given by Equation \eqref{amicable1} 
are $AOD\big(48;  \ 4,10,34; \ 4,44\big)$ (see the appendix in \cite{Ghaderpour}).
}
\end{ex}

%%%%%%%%%%%%%%%%%%%%%%%%%%%%%%%%%%%%%%%%%%%%%%%%%%%%%%%%%%%%%%%%%%%%%
 
\begin{cons}{\label{construction2}\rm
Suppose that $A_1$, $A_2$, $B$, $C$, $D$, $E$, $F$, $G$ are square matrices of order $n$, and ${\bf0}$ is the zero matrix. Let
$$ M_1=\left[\begin{array}{rrrr|rrrr|rrrr}
B&C&D&{\bf0}&{\bf0}&{\bf0}&{\bf0}&{\bf0}&{\bf0}&{\bf0}&{\bf0}&{\bf0}\\
-C&B&{\bf0}&-D&{\bf0}&{\bf0}&{\bf0}&{\bf0}&{\bf0}&{\bf0}&{\bf0}&{\bf0}\\
-D&{\bf0}&B&C&{\bf0}&{\bf0}&{\bf0}&{\bf0}&{\bf0}&{\bf0}&{\bf0}&{\bf0}\\
{\bf0}&D&-C&B&{\bf0}&{\bf0}&{\bf0}&{\bf0}&{\bf0}&{\bf0}&{\bf0}&{\bf0}\\
\hline
{\bf0}&{\bf0}&{\bf0}&{\bf0}&B&C&D&{\bf0}&{\bf0}&{\bf0}&{\bf0}&{\bf0}\\
{\bf0}&{\bf0}&{\bf0}&{\bf0}&-C&B&{\bf0}&-D&{\bf0}&{\bf0}&{\bf0}&{\bf0}\\
{\bf0}&{\bf0}&{\bf0}&{\bf0}&-D&{\bf0}&B&C&{\bf0}&{\bf0}&{\bf0}&{\bf0}\\
{\bf0}&{\bf0}&{\bf0}&{\bf0}&{\bf0}&D&-C&B&{\bf0}&{\bf0}&{\bf0}&{\bf0}\\
\hline
{\bf0}&{\bf0}&{\bf0}&{\bf0}&{\bf0}&{\bf0}&{\bf0}&{\bf0}&B&C&D&{\bf0}\\
{\bf0}&{\bf0}&{\bf0}&{\bf0}&{\bf0}&{\bf0}&{\bf0}&{\bf0}&-C&B&{\bf0}&-D\\
{\bf0}&{\bf0}&{\bf0}&{\bf0}&{\bf0}&{\bf0}&{\bf0}&{\bf0}&-D&{\bf0}&B&C\\
{\bf0}&{\bf0}&{\bf0}&{\bf0}&{\bf0}&{\bf0}&{\bf0}&{\bf0}&{\bf0}&D&-C&B
\end{array}\right], $$

$$  M_2=\left[\begin{array}{rrrr|rrrr|rrrr}
E&F&G&{\bf0}&{\bf0}&{\bf0}&{\bf0}&{\bf0}&{\bf0}&{\bf0}&{\bf0}&{\bf0}\\
F&-E&{\bf0}&-G&{\bf0}&{\bf0}&{\bf0}&{\bf0}&{\bf0}&{\bf0}&{\bf0}&{\bf0}\\
G&{\bf0}&-E&F&{\bf0}&{\bf0}&{\bf0}&{\bf0}&{\bf0}&{\bf0}&{\bf0}&{\bf0}\\
{\bf0}&-G&F&E&{\bf0}&{\bf0}&{\bf0}&{\bf0}&{\bf0}&{\bf0}&{\bf0}&{\bf0}\\
\hline
{\bf0}&{\bf0}&{\bf0}&{\bf0}&F&G&E&{\bf0}&{\bf0}&{\bf0}&{\bf0}&{\bf0}\\
{\bf0}&{\bf0}&{\bf0}&{\bf0}&G&-F&{\bf0}&-E&{\bf0}&{\bf0}&{\bf0}&{\bf0}\\
{\bf0}&{\bf0}&{\bf0}&{\bf0}&E&{\bf0}&-F&G&{\bf0}&{\bf0}&{\bf0}&{\bf0}\\
{\bf0}&{\bf0}&{\bf0}&{\bf0}&{\bf0}&-E&G&F&{\bf0}&{\bf0}&{\bf0}&{\bf0}\\
\hline
{\bf0}&{\bf0}&{\bf0}&{\bf0}&{\bf0}&{\bf0}&{\bf0}&{\bf0}&G&-E&-F&{\bf0}\\
{\bf0}&{\bf0}&{\bf0}&{\bf0}&{\bf0}&{\bf0}&{\bf0}&{\bf0}&-E&-G&{\bf0}&F\\
{\bf0}&{\bf0}&{\bf0}&{\bf0}&{\bf0}&{\bf0}&{\bf0}&{\bf0}&-F&{\bf0}&-G&-E\\
{\bf0}&{\bf0}&{\bf0}&{\bf0}&{\bf0}&{\bf0}&{\bf0}&{\bf0}&{\bf0}&F&-E&G
\end{array}\right],$$

$${ \small N_i=\left[\begin{array}{rrrr|rrrr|rrrr}
{\bf0}&{\bf0}&{\bf0}&-A_i&A_i&A_i&A_i&-A_i&A_i&-A_i&A_i&-A_i\\
{\bf0}&{\bf0}&-A_i&{\bf0}&A_i&-A_i&-A_i&-A_i&-A_i&-A_i&-A_i&-A_i\\
{\bf0}&A_i&{\bf0}&{\bf0}&A_i&A_i&-A_i&A_i&A_i&A_i&-A_i&-A_i\\
A_i&{\bf0}&{\bf0}&{\bf0}&A_i&-A_i&A_i&A_i&A_i&-A_i&-A_i&A_i\\
\hline
-A_i&-A_i&-A_i&-A_i&{\bf0}&{\bf0}&{\bf0}&-A_i&A_i&A_i&-A_i&A_i\\
-A_i&A_i&-A_i&A_i&{\bf0}&{\bf0}&-A_i&{\bf0}&A_i&-A_i&A_i&A_i\\
-A_i&A_i&A_i&-A_i&{\bf0}&A_i&{\bf0}&{\bf0}&-A_i&-A_i&-A_i&A_i\\
A_i&A_i&-A_i&-A_i&A_i&{\bf0}&{\bf0}&{\bf0}&-A_i&A_i&A_i&A_i\\
\hline
-A_i&A_i&-A_i&-A_i&-A_i&-A_i&A_i&A_i&{\bf0}&{\bf0}&{\bf0}&-A_i\\
A_i&A_i&-A_i&A_i&-A_i&A_i&A_i&-A_i&{\bf0}&{\bf0}&-A_i&{\bf0}\\
-A_i&A_i&A_i&A_i&A_i&-A_i&A_i&-A_i&{\bf0}&A_i&{\bf0}&{\bf0}\\
A_i&A_i&A_i&-A_i&-A_i&-A_i&-A_i&-A_i&A_i&{\bf0}&{\bf0}&{\bf0}
\end{array}\right],}$$

\noindent for $i\in\{1,2\}.$ Suppose that $A_1$, $A_2$, $B$, $C$, $D$, $E$, $F$ and $G$ are pairwise amicable ${\rm (}$not necessarily orthogonal designs${\rm )}$
and they satisfy the following properties:
\begin{align*}
&9A_1A_1^{\rm T}+BB^{\rm T}+CC^{\rm T}+DD^{\rm T}=uI_n, \\ 
&9A_2A_2^{\rm T}+EE^{\rm T}+FF^{\rm T}+GG^{\rm T}=vI_n,
\end{align*}
where $u$ and $v$ are two quadratic forms. Then, as in Construction $\ref{construction1}$, the matrices $U$ and $V$ in Equation $\eqref{dis}$ along with these new matrices ($M_1$, $M_2$, $N_1$, $N_2$) form an AOD of order $24n$; and $U$ and $V$ are disjoint.  Moreover, matrices $U_H$ and $V_H$ in Equation $\eqref{amicable1}$ along with these new matrices ($M_1$, $M_2$, $N_1$, $N_2$) form a full AOD of order $24n,$ provided the matrices $A_1$, $A_2$, $B$, $C$, $D$, $E$, $F$ and $G$ in this construction have no zero entries.}
\end{cons}

In the following two examples, we construct two full AODs using the above construction, and we refer to the appendix in \cite{Ghaderpour} for the display of these AODs.
\begin{ex}
{\rm Suppose that 
 $A_1={\rm backcirc}(x,-b,b)$, $A_2={\rm circ}(-d,d,d)$, $B={\rm circ}(b,b,b)$, $C={\rm circ}(b,b,b)$, $D={\rm circ}(b,b,b)$, 
$E={\rm circ}(d,d,d)$, $F={\rm circ}(d,d,d)$ and $G={\rm circ}(d,d,d).$ Then they satisfy all the conditions of 
Construction \ref{construction2}, and so matrices $U_H$ and $V_H$ given by Equation \eqref{amicable1}
form a full $ AOD\big(72; \ 18,54; \ 72\big).$}
\end{ex}
\begin{ex}
{\rm Consider $A_1={\rm backcirc}(a,-a,-a,a,-a,a,a)$,  $A_2={\rm circ}(c,-c,-c,c,-c,c,c)$, $B={\rm circ}(b,a,a,a,a,a,a)$, 
$C={\rm circ}(-b,a,a,a,a,a,a)$, $D={\rm circ}(a,-a,-a,a,-a,a,a)$, $E={\rm circ}(d,c,c,c,c,c,c)$, 
$F={\rm circ}(-d,c,c,c,c,c,c)$ and $G={\rm circ}(c,-c,-c,c,-c,c,c)$. 
It can be verified that these matrices satisfy all the conditions of 
Construction \ref{construction2}, and so matrices $U_H$ and $V_H$ given by Equation \eqref{amicable1} form a full $ \ AOD\big(168; \ 4,164; \ 4,164\big)$.}
\end{ex}

\begin{remark}
{\rm If we replace matrices $A_1$, $B$, $C$, $D$, $E$, $F$ and $G$  by variables in Constructions  \ref{construction1} and \ref{construction2},
then matrices $M_1$, $ M_2$ and $N_1$ will form product designs $PD\big(8; \ 1,1,1; \ 1,1,1; \ 5\big)$ and $PD\big(12; \ 1,1,1;\  1,1,1; \ 9\big)$, respectively.}
\end{remark}

%%%%%%%%%%%%%%%%%%%%%%%%%%%%%%%%%%%%%%%%%%%%%%%%%%%%%%%%%%%%%%%%%%%%%%%%%%%%%%%%%
\section{An infinite class of full amicable orthogonal designs}
In this section, we choose two full AODs that can be constructed from Constructions \ref{construction1} and \ref{construction2}, and show how one can obtain an infinite class of full AODs by using them. The following theorem is an application to an algebraic result that  Kawada and Iwahori \cite{kawada} obtained.
\begin{thm} {\rm (see \cite[Chapter 5]{GS}).}\label{KI}
Suppose  that $(A;B)$ is an AOD of order $n.$ Let $t$ be the  number of variables in $B$ and 
$\rho_t(n)$ be the number of variables in $A.$ 
Also,  $n=2^{4a+b}d,$  where $0\leq b<4$ and $d$ is an odd number. 
Then $$\rho_t(n)\le 8a-t+\delta +1,$$ where the values of $\delta$ are given in the following table:
\end{thm}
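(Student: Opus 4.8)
The plan is to convert the existence of the AOD $(A;B)$ into the existence of a real matrix representation, on $\mathbb{R}^{n}$, of a Clifford algebra whose number of generators is governed by $\rho_t(n)$ and $t$, and then to read the bound off from the (Bott-periodic) classification of such algebras; the tabulated values of $\delta$ are precisely what this classification produces, which is the content of the Kawada--Iwahori result \cite{kawada}.

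First I would write $A=\sum_{i=1}^{m}x_iA_i$ with $m=\rho_t(n)$ and $B=\sum_{j=1}^{t}y_jD_j$, where the component matrices $A_i$, $D_j$ have entries in $\{0,\pm 1\}$. The AOD equations force $A_iA_i^{\mathrm T}=a_iI_n$ and $A_iA_j^{\mathrm T}+A_jA_i^{\mathrm T}={\bf 0}$ for $i\neq j$, the analogous relations among the $D_j$, and $A_iD_j^{\mathrm T}=D_jA_i^{\mathrm T}$ for all $i,j$. Since each component is an invertible scalar multiple of an orthogonal matrix, I rescale over $\mathbb{R}$ so that every $A_i$ and every $D_j$ is orthogonal (this is harmless for an upper bound, as we only need the existence of \emph{some} real representation of dimension $n$). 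Setting
$$P_i:=A_1^{\mathrm T}A_i\ \ (2\le i\le m), \qquad Q_j:=A_1^{\mathrm T}D_j\ \ (1\le j\le t),$$
a short computation with the displayed relations gives $P_i^{\mathrm T}=-P_i$, $P_i^{2}=-I_n$, $Q_j^{\mathrm T}=Q_j$, $Q_j^{2}=I_n$, and shows that the $m-1+t$ matrices $P_2,\dots,P_m,Q_1,\dots,Q_t$ anticommute pairwise. Hence they give a representation on $\mathbb{R}^{n}$ of the real Clifford algebra $\mathcal C$ with $m-1$ anticommuting units squaring to $-I_n$ and $t$ squaring to $+I_n$.

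Next I would invoke the structure theory of real Clifford algebras (equivalently, the Hurwitz--Radon/Kawada--Iwahori analysis): $\mathcal C$ is a product of one or two full matrix algebras over $\mathbb{R}$, $\mathbb{C}$ or $\mathbb{H}$ according to the residue of $(m-1)-t$ modulo $8$, so every nonzero representation of $\mathcal C$ has dimension a multiple of a fixed power of two $2^{e(m-1,t)}$, where $e$ increases by one with each added generator subject to a correction of period $8$. Since the representation built above has dimension $n=2^{4a+b}d$ with $d$ odd, divisibility forces $e(m-1,t)\le 4a+b$; solving this inequality for $m$ and absorbing the residual dependence on $b$ and on the parity of $t$ into a constant yields $\rho_t(n)=m\le 8a-t+\delta+1$ with $\delta$ as tabulated.

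The main obstacle is the case analysis that produces the table: one must line up the trichotomy $\mathbb{R}/\mathbb{C}/\mathbb{H}$ and the mod-$8$ jump pattern of $e$ against the four possibilities $b\in\{0,1,2,3\}$ together with the parity of $t$, and then check that the resulting bound is sharp (so that the table records exact values rather than mere over-estimates), which requires exhibiting representations of the extremal dimension. This last, reverse step is the part that genuinely rests on the Kawada--Iwahori theorem, and I would cite \cite[Chapter 5]{GS} for it rather than reconstruct the extremal families by hand.
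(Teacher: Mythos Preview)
The paper does not supply its own proof of this theorem; it is quoted from \cite[Chapter~5]{GS} as an application of the Kawada--Iwahori structure theorem and then used as a black box in Example~\ref{aod244896}.  Your sketch is essentially the standard argument that lies behind the cited result: normalising the component matrices of the AOD, passing to $P_i=A_1^{\mathrm T}A_i$ and $Q_j=A_1^{\mathrm T}D_j$, and recognising the resulting anticommuting family (with $m-1$ elements squaring to $-I$ and $t$ squaring to $+I$) as a real representation of a Clifford algebra on $\mathbb{R}^{n}$ is precisely how the bound is obtained in \cite{GS}.

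Two small corrections.  First, the case analysis is over $t\pmod 4$, not merely the parity of $t$; the table has four rows, and the Bott period~$8$ in the signature $(m-1)-t$ combines with $b\in\{0,1,2,3\}$ to give a genuinely mod-$4$ dependence on $t$.  Second, the theorem as stated is only an \emph{upper} bound on the number of variables in $A$, so the ``reverse step'' of exhibiting extremal representations is not needed here: the divisibility constraint $2^{e(m-1,t)}\mid n$ coming from the minimal faithful dimension of $\mathcal{C}$ already yields the inequality, and you may simply cite \cite{kawada} or \cite[Chapter~5]{GS} for the explicit values of $e$ that produce the table.
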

\begin{center}
\begin{tabular}{|c|c|c|c|c|}
 \hline
    $b$   & $0$  & $1$ & $2$ & $3$ \\
\hline
$t \equiv 0 {\pmod 4}$ & $0$ & $1$ & $3$ & $7$  \\
\hline
$t \equiv 1 {\pmod 4}$ & $ 1$ & $2$ & $3$ & $5$  \\
\hline
$t \equiv 2{\pmod 4}$ & $-1$& $3$ & $4$ & $5$  \\
\hline
$t \equiv 3{\pmod 4}$ & $-1$ & $1$ & $5$ & $6$  \\
\hline
\end{tabular}
\end{center}
\begin{thm} {\rm(Wolfe \cite{Wolfe}).}\label{infiniteaod}
Suppose that there is an $AOD\big(n; \ u_1, u_2, \ldots, u_r; v_1,v_2,\ldots, v_s\big)$. Then for each $t\ge 1$, there is an 
$$AOD\Big(2^tn; \ \ u_1, u_1, 2u_1, \ldots, 2^{t-1}u_1, 2^tu_2,\ldots, 2^tu_r; \ 2^tv_1,2^tv_2,\ldots, 2^tv_s\Big).$$
\end{thm}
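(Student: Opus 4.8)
The plan is to build the larger AOD explicitly from the given one by tensoring with a small amicable system and then "splitting" the contribution of the variable $u_1$ in a recursive fashion. Suppose $(C;D)$ is the given $AOD\big(n;\ u_1,\dots,u_r;\ v_1,\dots,v_s\big)$, with $C$ an $OD$ in variables $x_1,\dots,x_r$ (where $x_i^2$ has coefficient $u_i$) and $D$ an $OD$ in variables $y_1,\dots,y_s$. First I would handle the base case $t=1$: I want to produce an $AOD\big(2n;\ u_1,u_1,2u_2,\dots,2u_r;\ 2v_1,\dots,2v_s\big)$. Write $C = x_1 C_1 + C'$, where $C_1$ collects the positions of $\pm x_1$ and $C'$ is the part in $x_2,\dots,x_r$; the defining identity $CC^{\mathrm T}=\big(\sum u_i x_i^2\big)I_n$ forces $C_1C_1^{\mathrm T}=u_1 I_n$, $C'C'^{\mathrm T}=\big(\sum_{i\ge2}u_i x_i^2\big)I_n$, and $C_1C'^{\mathrm T}+C'C_1^{\mathrm T}=0$ (anti-amicability), while amicability of $(C;D)$ gives $C_1D^{\mathrm T}=DC_1^{\mathrm T}$ and $C'D^{\mathrm T}=DC'^{\mathrm T}$. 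Now set, using a fresh variable $z$ in place of a second copy of $x_1$,
\[
\widehat C \;=\; C_1\otimes\!\left[\begin{array}{rr}x_1&z\\ z&-x_1\end{array}\right] \;+\; C'\otimes\!\left[\begin{array}{rr}1&0\\0&1\end{array}\right],\qquad
\widehat D \;=\; D\otimes\!\left[\begin{array}{rr}1&0\\0&-1\end{array}\right].
\]
A direct expansion, using the four relations above together with $\left[\begin{smallmatrix}x_1&z\\ z&-x_1\end{smallmatrix}\right]\!\left[\begin{smallmatrix}x_1&z\\ z&-x_1\end{smallmatrix}\right]^{\mathrm T}=(x_1^2+z^2)I_2$ and the fact that $\left[\begin{smallmatrix}x_1&z\\ z&-x_1\end{smallmatrix}\right]$ is symmetric while $\left[\begin{smallmatrix}1&0\\0&-1\end{smallmatrix}\right]$ is symmetric and commutes appropriately, shows $\widehat C\widehat C^{\mathrm T}=\big(u_1x_1^2+u_1z^2+\sum_{i\ge2}2u_ix_i^2\big)I_{2n}$, that $\widehat D\widehat D^{\mathrm T}=\big(\sum 2v_j y_j^2\big)I_{2n}$, and that $\widehat C\widehat D^{\mathrm T}=\widehat D\widehat C^{\mathrm T}$; the cross terms $C_1\otimes(\cdot)$ against $C'\otimes(\cdot)$ cancel because the $C_1,C'$ products are anti-amicable while the $2\times2$ blocks multiply symmetrically (or vice versa) — this sign bookkeeping is the one place to be careful.

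For the general $t$, I would iterate. Having produced from $(C;D)$ an $AOD\big(2^{t-1}n;\ u_1,u_1,2u_1,\dots,2^{t-2}u_1,2^{t-1}u_2,\dots,2^{t-1}u_r;\ 2^{t-1}v_1,\dots\big)$, apply the base-case construction again but only to the single variable of coefficient $2^{t-2}u_1$ (the "newest" copy of the $u_1$-family): that variable splits into two variables of coefficient $2^{t-2}u_1$ — consistent with the target list, since $2\cdot 2^{t-2}u_1 = 2^{t-1}u_1$ would be wrong, so instead one splits so that one half stays at $2^{t-2}u_1$ and is passed along unchanged at each later stage while the other half is doubled. Concretely, at each step the invariant to maintain is: the $C$-side has variable coefficients $u_1, u_1, 2u_1, 4u_1,\dots,2^{k-1}u_1$ together with $2^k u_2,\dots,2^k u_r$ after $k$ doublings, and the $D$-side coefficients are all multiplied by $2^k$. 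One checks this invariant is preserved by applying the $t=1$ construction to the OD-factor sitting in the variable of largest $u_1$-coefficient; after $t$ steps one lands exactly on the stated type.

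The main obstacle I anticipate is not any deep idea but the sign/symmetry accounting in the Kronecker expansion: one must pick the $2\times 2$ companion matrices so that, in every cross term, an anti-amicable pair of $n\times n$ blocks meets a symmetric-commuting pair of $2\times 2$ blocks (forcing cancellation) while every diagonal term meets a pair squaring to a scalar multiple of $I_2$, and simultaneously the $\widehat D$ companion must be symmetric enough to preserve amicability with both $C_1$- and $C'$-parts. The choice above — $\left[\begin{smallmatrix}x_1&z\\ z&-x_1\end{smallmatrix}\right]$ (symmetric, "norm form") for the split variable, $I_2$ for the untouched part, and $\mathrm{diag}(1,-1)$ for $D$ — is essentially forced, and verifying it is a finite check. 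Alternatively, one may invoke the amicable-pair version of Theorem \ref{pdod} or cite the Kawada–Iwahori machinery behind Theorem \ref{KI}, but the explicit construction above is self-contained and gives the precise type claimed.
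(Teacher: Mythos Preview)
The paper does not supply a proof of this theorem; it is quoted from Wolfe's thesis without argument. So there is no in-paper proof to compare against, and your proposal has to stand on its own.

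Your base case ($t=1$) contains a genuine error. With $\widehat C = C_1\otimes S + C'\otimes I_2$ and $\widehat D = D\otimes Q$, where $S=\left[\begin{smallmatrix}x_1&z\\z&-x_1\end{smallmatrix}\right]$ and $Q=\mathrm{diag}(1,-1)$, the diagonal contribution of the $C'$ part is $C'C'^{\rm T}\otimes I_2 = \big(\sum_{i\ge 2}u_ix_i^2\big)I_{2n}$, with \emph{no} factor of $2$; likewise $\widehat D\widehat D^{\rm T}=\big(\sum_j v_jy_j^2\big)I_{2n}$. Thus your pair is only an $AOD\big(2n;\,u_1,u_1,u_2,\dots,u_r;\,v_1,\dots,v_s\big)$, not the required $AOD\big(2n;\,u_1,u_1,2u_2,\dots,2u_r;\,2v_1,\dots,2v_s\big)$. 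Worse, amicability fails outright: $\widehat C\widehat D^{\rm T}-\widehat D\widehat C^{\rm T}=C_1D^{\rm T}\otimes(SQ-QS)$, and $SQ-QS=\left[\begin{smallmatrix}0&-2z\\2z&0\end{smallmatrix}\right]\neq 0$. So the ``finite check'' you defer is exactly where the construction breaks.

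The repair is to tensor $C'$ and $D$ not with $I_2$ and $Q$ but with a single matrix $K$ satisfying $KK^{\rm T}=2I_2$ \emph{and} $SK^{\rm T}=KS$. The choice $K=\left[\begin{smallmatrix}1&1\\-1&1\end{smallmatrix}\right]=I+R$ (in the paper's notation \eqref{IPQR}) works: $SK^{\rm T}=KS$ is a one-line check, so the cross term $C_1C'^{\rm T}\otimes(SK^{\rm T}-KS)$ vanishes, and with $\widehat C=C_1\otimes S+C'\otimes K$, $\widehat D=D\otimes K$ one gets precisely the type claimed together with $\widehat C\widehat D^{\rm T}=\widehat D\widehat C^{\rm T}$. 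Your description of the induction is also tangled; the clean invariant is simply: at each step split one variable of coefficient $u_1$, which produces two copies of $u_1$ and doubles every other coefficient on both sides, so after $t$ steps the $u_1$-block has become $u_1,u_1,2u_1,\dots,2^{t-1}u_1$ while all remaining coefficients carry a factor $2^t$.
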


\begin{cons}{\label{conaod16}\rm
 Replacing $A_1$, $B$, $C$, $D$, $A_2$, $E$, $F$ and $G$ by variables in Constructions $\ref{construction1}$ and $\ref{construction2}$, respectively, one obtains
$$AOD\big(16; \ 2,2,2,10; \ 2,2,2,10\big) \ \ \  {\rm and} \ \ \  AOD\big(24; \ 2,2,2,18; \ 2,2,2,18\big).$$ 
Applying Theorem $\ref{infiniteaod}$ for these AODs, one obtains an infinite class of full AODs:
$$AOD\Big(2^n; \ 2_{(3)}^{n-3}, 10, 10, 5\cdot 2^2,\ldots, 5\cdot 2^{n-4}; 
 \ 2_{(3)}^{n-3}, 5\cdot 2^{n-3}\Big), \ \ \ \ \ n>4,$$
$$AOD\Big(2^n\cdot 3; \ 2_{(3)}^{n-2}, 18, 18, 9\cdot 2^2,\ldots, 9\cdot 2^{n-3};  
 \ 2_{(3)}^{n-2}, 9\cdot 2^{n-2}\Big), \ \ \ n>3.$$
}
\end{cons}

\begin{ex}{\label{aod244896} \rm
From Construction \ref{conaod16}, we obtain 

(i)   $AOD\big(24; \ 2,2,2,18; \ 2,2,2,18\big),$ 

(ii)   $AOD\big(48; \ 4,4,4,18,18; \ 4,4,4,36\big),$ 

(iii) $AOD\big(96; \ 8,8,8,18,18,36; \ 8,8,8,72\big).$

\noindent According to Theorem \ref{KI}, these AODs have taken the maximum number of variables. We refer to the appendix in \cite{Ghaderpour} for the display of these AODs.
}
\end{ex}

\section{A full amicable  orthogonal designs in 16 variables}

In this section, we construct an $AOD\big(2^9; \ 2^6_{(8)}; \ 2^6_{(8)}\big)$, and consequently an $OD\big(2^{10}; \ 2^6_{(16)}\big)$.
\begin{thm} {\rm (Wolfe \cite{Wolfe}).}\label{wolfe}
 Given an integer $n=2^sd,$ where $d$ is odd and $s\geq 1,$ there exist sets $A=\big\{A_1, \ldots, A_{s+1}\big\}$ and 
$B=\big\{B_1, \ldots, B_{s+1}\big\}$ of signed permutation matrices of order $n$ such that

{\rm (i)} $A$ consists of mutually anti-amicable and disjoint matrices,

{\rm (ii)} $B$ consists of mutually anti-amicable and  disjoint matrices,

{\rm (iii)} for each $i$ and $j,$ $A_i$ and $B_j$ are amicable.
\end{thm}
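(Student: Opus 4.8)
The plan is to proceed by induction on $s$, building the two families of signed permutation matrices from a tensor-product construction based on the $2\times 2$ matrices already in play. For the base case $s=1$ (so $n=2d$), take $A_1 = I_d\otimes I$, $A_2 = I_d\otimes R$ and $B_1 = I_d\otimes P$, $B_2 = I_d\otimes Q$, where $I,P,Q,R$ are as in \eqref{IPQR}; the relations $RI^{\rm T}=-IR^{\rm T}$ and $PQ^{\rm T}=-QP^{\rm T}$ give the anti-amicability within each family, while $PI^{\rm T}=IP^{\rm T}$, $QI^{\rm T}=IQ^{\rm T}$, $PR^{\rm T}=RP^{\rm T}$, $RQ^{\rm T}=QR^{\rm T}$ give cross-amicability, and the disjointness statements $I*P={\bf0}$, $Q*R={\bf0}$ together with $I*R = Q*P = {\bf 0}$ (direct check) handle (i)--(ii). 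Each of $I,P,Q,R$ is a signed permutation matrix of order $2$, so the Kronecker products with $I_d$ are signed permutation matrices of order $n$.

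For the inductive step, suppose the families $\{A_i\}_{i=1}^{s+1}$, $\{B_i\}_{i=1}^{s+1}$ exist in order $m = 2^{s}d$; I want families of size $s+2$ in order $2m = 2^{s+1}d$. The natural move is to set, for $1\le i\le s+1$,
$$
A_i' = A_i\otimes Q, \qquad A_{s+2}' = I_m\otimes R,
$$
and dually
$$
B_i' = B_i\otimes P, \qquad B_{s+2}' = I_m\otimes I.
$$
Then $(A_i'){A_j'}^{\rm T} = A_iA_j^{\rm T}\otimes QQ^{\rm T}$, so anti-amicability of $A_i,A_j$ transfers; $(A_i'){A_{s+2}'}^{\rm T} = A_i\otimes QR^{\rm T}$ versus $A_{s+2}'(A_i')^{\rm T} = A_i\otimes RQ^{\rm T}$, and since $RQ^{\rm T}=QR^{\rm T}$ these are \emph{equal}, not opposite --- so this exact pairing fails and one must instead place a skew $2\times 2$ matrix in the new row. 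The correct choice is to use $Q$ and $R$ in the slots so that the last generator anti-commutes with all earlier ones and simultaneously keeps the cross-family amicability; concretely one checks that $A_{s+2}' = I_m\otimes R$ together with $A_i' = A_i\otimes Q$ does give anti-amicability because $RQ^{\rm T} = QR^{\rm T}$ forces us instead to the pair $(P,Q)$-type skewness, so the clean bookkeeping is: multiply the old $A$'s by one fixed symmetric unit ($Q$), append $I_m\otimes R$; multiply the old $B$'s by the other symmetric unit-or-its-transpose as dictated by $P I^{\rm T}=IP^{\rm T}$, and append $I_m\otimes I$. One then runs through the finite list of relations in \eqref{IPQR} to confirm all three conclusions, using that disjointness is preserved under $\otimes$ when at least one tensor factor is disjoint from the corresponding factor of the other matrix.

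The main obstacle, and the place the argument needs care rather than raw computation, is choosing the four tensor slots ($Q$ versus $R$ versus their transposes, and which family gets which) so that \emph{all three} properties survive the inductive step at once: anti-amicability inside $A$, anti-amicability inside $B$, and amicability between $A$ and $B$. The sign identities $RI^{\rm T}=-IR^{\rm T}$ and $PQ^{\rm T}=-QP^{\rm T}$ pull in one direction (they want skew factors in the off-diagonal slot) while $PR^{\rm T}=RP^{\rm T}$ and $QI^{\rm T}=IQ^{\rm T}$ pull in the other (they want symmetric factors), so a single uniform choice does not work for both the "old generators against each other'' and the "old generators against the new one'' cases; the resolution is to conjugate or re-index so that the new generator sits in a slot where the relevant $2\times 2$ product is skew. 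Once the slot assignment is fixed the verification is a finite check against the six displayed $2\times 2$ relations plus the disjointness observations $I*P = Q*R = I*R = P*Q = {\bf 0}$, and the orders and signed-permutation property are immediate since Kronecker products of signed permutation matrices are signed permutation matrices. I would finish by remarking that $|A| = |B| = s+1$ matches the bound, and that this is exactly the structure exploited in Constructions \ref{construction1} and \ref{construction2}.
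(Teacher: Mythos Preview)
Your inductive strategy is natural and, once carried out correctly, essentially reproduces the paper's construction; but as written the proposal has a genuine gap at the inductive step. You propose $A_i'=A_i\otimes Q$, $A_{s+2}'=I_m\otimes R$, $B_i'=B_i\otimes P$, $B_{s+2}'=I_m\otimes I$, notice that the $A$-side fails, and then never actually supply a replacement --- the sentences beginning ``one checks that\ldots'' and ``the resolution is to conjugate or re-index\ldots'' do not specify any concrete assignment. Worse, the $B$-side fails more dramatically than the $A$-side: $B_{s+2}'=I_{2m}$ gives $B_i'(B_{s+2}')^{\rm T}=B_i'$ and $(B_{s+2}')(B_i')^{\rm T}=(B_i')^{\rm T}$, so anti-amicability would force every $B_i'=B_i\otimes P$ to be skew-symmetric; but your base $B_1=I_d\otimes P$ and $B_2=I_d\otimes Q$ are symmetric, and tensoring by $P$ preserves symmetry. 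So neither the new $A$-generator nor the new $B$-generator anti-commutes with its old family under your rule.

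The underlying obstruction is that a \emph{uniform} tensor factor on all old generators cannot work: with the relations in \eqref{IPQR}, any fixed $2\times2$ factor $X$ makes $A_i\otimes X$ and $I_m\otimes Y$ amicable or anti-amicable according to whether $A_i$ is symmetric or skew (since $XY^{\rm T}=\pm YX^{\rm T}$ is a fixed sign), and your families contain both kinds. The paper avoids the bookkeeping entirely by writing down closed-form tensor products,
\[
A_1=\Big(\bigotimes_{i=1}^s I\Big)\otimes I_d,\qquad A_k=\Big(\bigotimes_{i=1}^{k-2} I\Big)\otimes R\otimes\Big(\bigotimes_{i=k}^{s}P\Big)\otimes I_d\quad(2\le k\le s{+}1),
\]
and the analogous $B_k$ with $Q$ in place of $R$ and $B_1=(\bigotimes^s P)\otimes I_d$, then checking (i)--(iii) directly. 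If you insist on induction, the step that actually works (and that these formulas encode) is non-uniform: pass to order $2m$ by $A_1\mapsto A_1\otimes I$, $A_k\mapsto A_k\otimes P$ for $k\ge 2$, append $A_{s+2}=I_m\otimes R$; and $B_k\mapsto B_k\otimes P$ for all $k$, append $B_{s+2}=I_m\otimes Q$. This requires strengthening the inductive hypothesis to record that $A_1$ is the identity (hence symmetric) and each $A_k$, $k\ge2$, is skew-symmetric, which your statement of the induction does not include.
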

\begin{proof}
{For each $k$, $2\leq k\leq s+1$, let
$$A_1=\Big(\displaystyle\otimes_{i=1}^s I\Big)\otimes I_d, \ \ \ A_k=\Big(\otimes_{i=1}^{k-2} I\Big)\otimes R\otimes \Big(\otimes_{i=k}^s P \Big)\otimes I_d,$$ 
and $$B_1=\Big(\otimes_{i=1}^s P\Big)\otimes I_d, \ \ \ B_k=\Big(\otimes_{i=1}^{k-2} I\Big)\otimes Q\otimes \Big(\otimes_{i=k}^s P\Big) \otimes I_d,$$
where $I$, $P$, $Q$, $R$ are given by Equation \eqref{IPQR}, and $I_d$ is the identity matrix of order $d$. 
Then the matrices  $A_i$'s and $B_i$'s ($1\leq i \leq s+1$) satisfy properties  ${\rm (i)}$, ${\rm (ii)}$ and ${\rm (iii)}$. }
\end{proof}

\begin{lem}\label{AOD16}
There exists an $AOD\big(2^9; \ 2^6_{(8)}; \ 2^6_{(8)}\big)$.
\end{lem}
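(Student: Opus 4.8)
The plan is to build the $AOD\big(2^9;\ 2^6_{(8)};\ 2^6_{(8)}\big)$ by a tensoring/doubling argument anchored on a small full AOD in many variables, exactly in the spirit of Theorem~\ref{infiniteaod} but pushed to attain the Wolfe bound. First I would recall that by Wolfe's bound the total number of variables in an AOD of order $2^9$ is at most $2\cdot 9+2=20$, so an AOD with $8+8=16$ variables is very close to extremal; in fact, applying Theorem~\ref{KI} with $n=2^9$, $b=1$, $a=2$, $t=8\equiv 0\pmod 4$ gives $\rho_8(2^9)\le 8\cdot 2-8+1+1=10$, so $8$ variables on the $A$-side against $8$ on the $B$-side is what one should aim for. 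The target type $\big(2^6_{(8)};2^6_{(8)}\big)$ has each block of weight $64$ and $8\cdot 64=2^9$, so both $U$ and $V$ must be \emph{full} (no zero entries) with $UU^{\mathrm T}=VV^{\mathrm T}=\big(\sum_{i=1}^8 2^6 x_i^2\big)I_{2^9}$ on disjoint variable sets and $UV^{\mathrm T}=VU^{\mathrm T}$.

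The key steps, in order, are: (1) Start from a full $AOD\big(16;\ 2_{(8)};\ 2_{(8)}\big)$ — i.e. a full AOD of order $16$ with eight variables on each side, each variable of weight $2$. This is the natural base object: order $16=2^4$, and $8$ variables on each side matches the Kawada--Iwahori ceiling ($\rho_8(16)=8\cdot1-8+1+1=2$… here one must be slightly careful, so more likely the base is order $32$ with the full $OD(32;1_{(5)},\dots)$ circle of ideas, or a hand-built $16$-variable AOD of order $2^k$ for small $k$). (2) Tensor with a Hadamard matrix of order $2^{9-k}$, or iterate the Wolfe doubling of Theorem~\ref{infiniteaod}, chosen so that doubling multiplies \emph{all} weights uniformly by powers of $2$ rather than splitting off singletons — the point is to keep the type of the form $2^6_{(8)}$ on each side rather than the ``fanning'' type $2_{(3)}^{n-3},10,10,\dots$ that generic doubling produces. (3) Use the signed permutation matrices $A_i,B_j$ of Theorem~\ref{wolfe} (with $n=2^9$, so $s=9$) as the amicable ``skeleton'': combine eight mutually anti-amicable disjoint $A_i$'s carrying the eight $U$-variables and eight mutually anti-amicable disjoint $B_j$'s carrying the eight $V$-variables, with every $A_i$ amicable to every $B_j$, and verify $UU^{\mathrm T}$, $VV^{\mathrm T}$ collapse to scalar multiples of $I$ and $UV^{\mathrm T}=VU^{\mathrm T}$. (4) Finally check fullness: since each $A_i$ (resp. $B_j$) is a signed permutation matrix and the eight of them are mutually disjoint, $\sum_i x_i A_i$ need not be full, so one actually tensors these skeletons against the \emph{full} AOD produced by Construction~\ref{construction1} (the $U_H,V_H$ version with a Hadamard $H$) to fill in the zeros, and confirms that the resulting order is $2^9$ and the type is $\big(2^6_{(8)};2^6_{(8)}\big)$.

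The main obstacle I expect is reconciling \emph{fullness} with the rigid type $2^6_{(8)}$: Wolfe's disjoint signed-permutation matrices give amicability and the right orthogonality relations for free, but a sum of disjoint permutation matrices has many zero entries, so the naive construction yields an AOD that is far from full; conversely, the full AODs coming from Constructions~\ref{construction1}--\ref{construction2} have ``fanned'' types like $(2,2,2,10)$ rather than eight equal blocks. Bridging these — getting \emph{both} fullness \emph{and} eight equal-weight blocks on each side — is the crux. The natural resolution is a hybrid: take a full AOD of small order with the balanced type $\big(2^m_{(8)};2^m_{(8)}\big)$ as the seed (built by hand, analogous to the $AOD(48;\ldots)$ and $AOD(168;\ldots)$ examples, or pieced together from circulant/back-circulant blocks as in Example~\ref{aodfull48}), then tensor with a Hadamard matrix of the complementary order $2^{9}/(\text{seed order})$, which multiplies every block weight by the same power of $2$ and preserves fullness, amicability, and orthogonality simultaneously. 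Verifying that such a balanced full seed exists (most plausibly of order $2^6$ with each variable weight $8$, or order $2^5$ with weight $4$, tensored up appropriately) and that the arithmetic of weights lands exactly on $2^6$ is the step requiring genuine care; the matrix identities themselves, once the building blocks are fixed, are the kind of routine Kronecker-product verification already used throughout Constructions~\ref{construction1} and \ref{construction2}.
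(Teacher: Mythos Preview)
Your proposal is not a proof but a menu of strategies, and each item on the menu fails at a concrete point. The ``balanced seed $+$ Hadamard tensor'' route collapses immediately on the Wolfe bound: an $AOD(2^4;2_{(8)};2_{(8)})$ would need $16$ variables but the bound is $2\cdot 4+2=10$; likewise order $2^5$ caps at $12$ and order $2^6$ at $14$ (the paper states the last case explicitly right after Theorem~\ref{128orthogonal}), so no balanced $16$-variable seed exists at order $\le 2^6$, and order $2^7$ is declared open. Theorem~\ref{infiniteaod} is also a dead end for this lemma: doubling fans one weight into a geometric progression and can never produce the uniform type $2^6_{(8)}$. And your step~(3)---taking Wolfe's matrices at order $2^9$ and summing eight of them---gives only eight nonzero entries per row out of $512$, which you acknowledge but never repair; ``tensor with $U_H,V_H$ from Construction~\ref{construction1}'' does not fix it, because that tensor product changes both the order and the type in ways you do not control.

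The paper's argument contains one idea you are missing entirely. Apply Theorem~\ref{wolfe} at order $2^7$ (not $2^9$), obtaining $A_1,\dots,A_8$ and $B_1,\dots,B_8$. Pair them and multiply by a Hadamard matrix $H$ of order $2^7$:
\[
X_j=\tfrac12\big[x_{2j-1}(A_{2j-1}-A_{2j})+x_{2j}(A_{2j-1}+A_{2j})\big]H,\qquad 1\le j\le 4,
\]
and similarly $Y_j$ from the $B$'s. Because $A_{2j-1},A_{2j}$ are disjoint signed permutations, every entry of $X_j$ is $\pm x_{2j-1}$ or $\pm x_{2j}$, so $X_j$ is \emph{full} and $X_jX_j^{\mathrm T}=2^6(x_{2j-1}^2+x_{2j}^2)I$. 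Anti-amicability of the $A_i$'s makes the four $X_j$ pairwise anti-amicable, and amicability of every $A_i$ with every $B_k$ makes every $X_j$ amicable with every $Y_k$. Finally assemble
\[
C=I\otimes I\otimes X_1+I\otimes P\otimes X_2+P\otimes I\otimes X_3+P\otimes P\otimes X_4
\]
and the analogous $D$; this is a $4\times4$ block array in which each block is one of the full $X_j$, so $C$ is full of order $2^9$, and the orthogonality/amicability relations give $CC^{\mathrm T}=2^6(\sum x_i^2)I$, $DD^{\mathrm T}=2^6(\sum y_i^2)I$, $CD^{\mathrm T}=DC^{\mathrm T}$. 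The pairing $(A_{2j-1}\pm A_{2j})H$ is precisely the device that reconciles fullness with equal weights, and it is absent from your sketch.
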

\begin{proof}{
Suppose that $A=\{A_1,\ldots, A_8\}$ and $B=\{B_1,\ldots, B_8\}$ are two sets of signed permutation matrices of order $2^7$ satisfying properties ${\rm (i)}$, ${\rm (ii)}$ and ${\rm (iii)}$ of Theorem \ref{wolfe}. Let $H$ be a Hadamard matrix of order $2^7$. For each $j$, $1\le j\le 4$, let
$$X_j=\frac{1}{2}x_{2j-1}\big(A_{2j-1}-A_{2j}\big)H+\frac{1}{2}x_{2j}\big(A_{2j-1}+A_{2j}\big)H$$ and $$Y_j=\frac{1}{2}y_{2j-1}\big(B_{2j-1}-B_{2j}\big)H+\frac{1}{2}y_{2j}\big(B_{2j-1}+B_{2j}\big)H.$$
Note that for $1\le i\neq j\le 4$, $X_iX_j^{\rm T}=-X_jX_i^{\rm T}$ and $Y_iY_j^{\rm T}=-Y_jY_i^{\rm T}$. For $1\le i,j\le 4$, $X_iY_j^{\rm T}=Y_jX_i^{\rm T}$. 
Also, for each $j$, $1\le j\le 4$, $$X_jX_j^{\rm T}=2^6\big(x_{2j-1}^2+x_{2j}^2\big)I_{2^7}\ \ \ {\rm and} \ \ \ Y_jY_j^{\rm T}=2^6\big(y_{2j-1}^2+y_{2j}^2\big)I_{2^7}.$$
Let
\begin{align*}
C&=I \otimes I\otimes X_1+I \otimes P\otimes X_2+P \otimes I\otimes X_3+P \otimes P\otimes X_4, \\
D&=I \otimes I\otimes Y_1 \ +I \otimes P\otimes Y_2+P \otimes I\otimes Y_3 \ +P \otimes P\otimes Y_4.
\end{align*}
It can be directly verified that $\displaystyle CC^{\rm T}=\Big(2^6\sum_{i=1}^8x_i^2\Big)I_{2^9}$, $\displaystyle DD^{\rm T}=\Big(2^6\sum_{i=1}^8y_i^2\Big)I_{2^9}$ and $CD^{\rm T}=DC^{\rm T}$. Therefore, $C$ and $D$ are an $AOD\big(2^9; \ 2^6_{(8)}; \ 2^6_{(8)}\big)$.}
\end{proof}

\begin{thm}\label{128orthogonal}
There is an $OD\big(2^{10}; \ 2^6_{(16)}\big)$.
\end{thm}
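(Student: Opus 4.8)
The plan is to invoke the standard doubling construction for orthogonal designs from an amicable orthogonal design, applied to the $AOD\big(2^9; \ 2^6_{(8)}; \ 2^6_{(8)}\big)$ produced in Lemma~\ref{AOD16}. Recall from Section~\ref{introduction} that if $(C;D)$ is an AOD of order $n$, then $\left[\begin{array}{rr} C&D\\ D&-C\end{array}\right]$ is an orthogonal design of order $2n$ whose type is the concatenation of the two types of the AOD. So the entire argument is one line once Lemma~\ref{AOD16} is in hand.

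Concretely, let $C$ and $D$ be the two matrices of order $2^9$ constructed in the proof of Lemma~\ref{AOD16}, so that $CC^{\rm T}=\big(2^6\sum_{i=1}^8 x_i^2\big)I_{2^9}$, $DD^{\rm T}=\big(2^6\sum_{i=1}^8 y_i^2\big)I_{2^9}$, and $CD^{\rm T}=DC^{\rm T}$. The two variable sets $\{x_1,\dots,x_8\}$ and $\{y_1,\dots,y_8\}$ are disjoint. Form
$$
E=\left[\begin{array}{rr} C&D\\ D&-C\end{array}\right],
$$
a square matrix of order $2\cdot 2^9=2^{10}$ with entries in $\{0,\pm x_1,\dots,\pm x_8,\pm y_1,\dots,\pm y_8\}$. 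Then a block computation gives
$$
EE^{\rm T}=\left[\begin{array}{cc} CC^{\rm T}+DD^{\rm T} & CD^{\rm T}-DC^{\rm T}\\ DC^{\rm T}-CD^{\rm T} & DD^{\rm T}+CC^{\rm T}\end{array}\right]
=\Big(2^6\sum_{i=1}^8 x_i^2+2^6\sum_{i=1}^8 y_i^2\Big)I_{2^{10}},
$$
using $CD^{\rm T}=DC^{\rm T}$ to kill the off-diagonal blocks and $CC^{\rm T}+DD^{\rm T}=\big(2^6\sum x_i^2+2^6\sum y_i^2\big)I_{2^9}$ on the diagonal. Relabelling the sixteen variables $x_1,\dots,x_8,y_1,\dots,y_8$ as $z_1,\dots,z_{16}$, this says $EE^{\rm T}=\big(2^6\sum_{i=1}^{16} z_i^2\big)I_{2^{10}}$, i.e. $E$ is an $OD\big(2^{10}; \ 2^6_{(16)}\big)$.

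There is essentially no obstacle here: the only nontrivial input is Lemma~\ref{AOD16}, which has already been established, and the passage from an AOD to an OD is the general fact recorded in the introduction (applied with $n=2^9$, $k=m=8$, and all $c_j=d_j=2^6$). The one point worth stating explicitly in the write-up is that the two variable sets of the AOD are disjoint, so that the resulting OD genuinely has sixteen distinct variables rather than eight; this is exactly the hypothesis built into the definition of an AOD, so it is automatic. Hence the proof is just: apply Lemma~\ref{AOD16}, then apply the AOD-to-OD construction.
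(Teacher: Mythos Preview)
Your proof is correct and is essentially identical to the paper's own proof: both take the matrices $C$ and $D$ from Lemma~\ref{AOD16} and form $\left[\begin{smallmatrix} C&D\\ D&-C\end{smallmatrix}\right]$, invoking the AOD-to-OD doubling construction from the introduction. The only difference is that you spell out the block computation of $EE^{\rm T}$ explicitly, whereas the paper leaves it as ``it can easily be verified.''
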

\begin{proof}{
Let $C$ and $D$ be the matrices constructed in the proof of Lemma \ref{AOD16}. Since $CD^{\rm T}=DC^{\rm T}$, it can easily be verified that $\left[ \begin {array}{rr} C&D\\ D&-C\end {array} \right]$ is an $OD\big(2^{10}; \ 2^6_{(16)}\big)$.}
\end{proof}
Kharaghani \cite{Kharaghani} constructed an $OD\big(2^{10}; \ 2^6_{(16)}\big)$ using a different method. 
Since the maximum number of variables in an AOD of order $2^6$ is $14$, there does not exist any $AOD\big(2^6; \ 2^3_{(8)}; \ 2^3_{(8)}\big)$; however, it is not known whether or not there exists an $AOD\big(2^7; \ 2^4_{(8)}; \ 2^4_{(8)}\big)$. Although one of the main purposes of Robinson \cite{Robinson} for introducing PDs was to construct full ODs with the maximum number of variables, he did not construct any full OD of order $128$ with $16$ variables. It is not known whether there exists any full OD of order $128$ with the maximum number of variables. In fact, it is conjectured \cite{Kharaghani} that whether there exists an $OD\big(128; \ 2^{3}_{(16)}\big)$. 

\section{Acknowledgement}
The paper constitutes a part of the author's Ph.D. thesis written under the direction of Professor Hadi Kharaghani at the University of Lethbridge. The author would like to thank Professor Hadi Kharaghani for introducing the problem and his very useful guidance toward solving the problem and also Professor Rob Craigen for his time and great help.

\end{document}